%% daj-template.tex v0.33     23 Sep 2016   Alex Russell/Laszlo Babai
%%
%% AUTHOR: Fill in fields (or see warnings) below marked with "AUTHOR"
%% ** Add as few macro / package definitions as possible
%% ** Compile with "pdflatex"; make sure that
%%           daj.cls and tocbase.cls are in the same directory.
%%
%% EDITOR: Fill in fields below marked with "EDITOR"
%%    and check that authors proprely filled in field marked with "AUTHOR"

\documentclass{daj}

\usepackage{geometry}
\usepackage{amsmath}
\usepackage{amssymb}
\usepackage{amsthm}
\usepackage{cleveref}
\usepackage{mathtools}
\usepackage{tikz}
\usepackage{comment}
\usetikzlibrary{decorations.markings}
\theoremstyle{definition}
\newtheorem{thm}{Theorem}[section]
\newtheorem{conj}[thm]{Conjecture}
\newtheorem{rmk}[thm]{Remark}
\newtheorem{prop}[thm]{Proposition}
\newtheorem{clm}[thm]{Claim}
\newtheorem{defn}[thm]{Definition}
\newtheorem{exmp}[thm]{Example}
\newtheorem{lem}[thm]{Lemma}

\newcommand{\co}{\operatorname{co}}

%%%%%%%%%%%%%%%%%%%%%%%%%%%%%%%%%%%%%%%%%%%%%%%%
%% AUTHOR: Fill in meta-data below:
\dajAUTHORdetails{%
  title = {Sharp $L^1$ Inequalities for Sup-Convolution}, %% please capitalize all significant words
  author = {Peter van Hintum, Hunter Spink, and Marius Tiba},
    %% Please use the format for commas as follows:
    %% "A", or "A and B", or "A, B, and C", or "A, B, C, and D", etc.
  plaintextauthor = {Peter van Hintum, Hunter Spink, Marius Tiba},
    %% An author list in plain text: Use the format
    %% "A", or "A, B", or "A, B, C", etc.
    %% NOTE: No LaTeX code in author names.
    %% NOTE: No "and" at the end--simply comma separated,
    % 
 %% The remaing lines in this section are optional:
    %
    %% IF YOUR TITLE CONTAINS MATH OR LATEX such as accented characters: 
    %% Add a "plain text title";  otherwise comment out the next line:
  plaintexttitle = {Sharp L1 Inequalities for Sup-Convolution}, %%  title without math or LaTeX
    %
    %% ONLY IF YOUR TITLE IS TOO LONG to fit in the page headers, please 
    %% add an abbreviated version of the title, otherwise comment it out:
  %runningtitle = {R\"odl's $n^{\log\log n}$ Bound}, 
    %
    %% ONLY IF YOUR AUTHOR LIST IS TOO LONG to fit in the page headers, 
    %% add an abbreviated version, otherwise comment it out:
  %runningauthor = {Paul Erd\H{o}s, Johan H{\aa}stad, L\'aszl\'o Lov\'asz, and Andrew C-C. Yao},
    %% you can replace first names and/or middle names with initials.
    %
    %% ONLY IF YOUR AUTHOR LIST IS TOO LONG to fit the copyright entry
    %% on the bottom of the front page,
    %% add an abbreviated version, otherwise comment it out:
  %copyrightauthor = {P. Erd\H{o}s, J. H{\aa}stad, L. Lov\'asz, and A. C-C. Yao},
    %% Note that the copyrightauthor  field will seldom be necessary;
    %% for instance, in this example with four authors, it would be 
    %% all right to comment it out and have all authors' full names 
    %% appear on the Copyright line
   %
   %% Include keywords of your choice: comma separated, lower case;
   %% comment out the "keywords" line if you don't wish to provide them
  %keywords = {keyword, keyword, etc.},
}   %%% END \dajAUTHORdetails

%%%%%%%%%%%%%%%%%%%%%%%%%%%%%%%%%%%%%%%%%%%%%%%%
%%% EDITOR: please fill in the following data:
\dajEDITORdetails{%
   year={2023},
   %volume={XX},
   number={7},
   received={12 August 2020},   % received date: example: 7 January 2017
   %revised={XX Month 20XX},    % Optional revised date (you may comment it out)
   published={4 July 2023},  % published date
   doi={10.19086/da.81314},       % XXX = number of paper, e.g. da006 for paper#6
%                              % or  da0006 (length of string arbitrary)
}   %%% END \dajEDITORdetails

\begin{document}

\begin{frontmatter}[classification=text]
%% EDITOR: this will force the keywords to appear right after the Abstract.
%%   If the abstract is too long and would force the keywords off the
%%   front page, please comment out % [classification=text] above
%%   This way the keywords will be floated on the bottom of the first page
%%   even though the Abstract spills over to the next page.

%%% AUTHOR: Title goes here.  This line is optional.  You must use it
%%   if title has footnote attached or requires nontrivial typesetting,
%%   e.g., inclusion of linebreaks to force nice layout.
\title{Sharp $L^1$ Inequalities for Sup-Convolution} %% please capitalize all significant words

%%% AUTHOR:
%%% List all authors. If you wish, place grant acknowledgements in \thanks.
%%% In brackets include a short tag for each author.
\author[pvh]{Peter van Hintum}
\author[hs]{Hunter Spink}
\author[mt]{Marius Tiba}

%%% AUTHOR: Abstract goes here
\begin{abstract}
Given a compact convex domain $C\subset \mathbb{R}^k$ and bounded measurable functions $f_1,\ldots,f_n:C\to \mathbb{R}$, define the sup-convolution $(f_1\ast \ldots \ast f_n)(z)$ to be the supremum average value of $f_1(x_1),\ldots,f_n(x_n)$ over all $x_1,\ldots,x_n\in C$ which average to $z$. Continuing the study by Figalli and Jerison and the present authors of linear stability for the Brunn-Minkowski inequality with equal sets, for $k\le 3$ we find the optimal constants $c_{k,n}$ such that
$$\int_C f^{\ast n}(x)-f(x) dx \ge c_{k,n}\int_C\co(f)(x)-f(x) dx$$ where $\co(f)$ is the upper convex hull of $f$. Also, we show $c_{k,n}=1-O(\frac{1}{n})$ for fixed $k$ and prove an analogous optimal inequality for two distinct functions. The key geometric insight is a decomposition of polytopal approximations of $C$ into hypersimplices according to the geometry of the set of points where $\co(f)$ is close to $f$.
\end{abstract}
\end{frontmatter}

%%% AUTHOR: body of paper starts here

\section{Introduction}
Let $C\subset \mathbb{R}^k$ be a compact convex domain. For a bounded function $f:C\to \mathbb{R}$, $\co(f)$ is defined to be the upper convex hull of $f$ (the infimum of all concave functions larger than $f$), and for bounded measurable functions $f_1,\ldots,f_n:C\to \mathbb{R}$, the \emph{sup-convolution} is defined to be
$$(f_1 \ast \ldots \ast f_n)(z):=\sup\left\{\frac{f_1(x_1)+\ldots+f(x_n)}{n} : \frac{x_1+\ldots +x_n}{n}=z\right\}.$$
The operation of sup-convolution, or in its equivalent form inf-convolution $-((-f_1)\ast \ldots \ast (-f_n))$, naturally appears in problems of optimization, with $f_1,\ldots,f_n$ utility functions and $C$ representing a cost domain \cite{Ekeland}. For a general survey, see \cite{Infconvsurvey}. Clearly $f_1\ast \ldots \ast f_n \ge \frac{f_1+\ldots+f_n}{n}$, and equality is attained when for example $f_1,\ldots,f_n$ are scalings of the same concave function $f=\co(f)$.

We can view the sup-convolution operation geometrically in terms of the Minkowski sum of regions in $\mathbb{R}^{k+1}$. Indeed, consider the hypograph $$A_{f,\lambda}=\{(x,y)\in C\times \mathbb{R}: \lambda \le y \le f(x)\}.$$
Then we have the closed convex hull $\overline{\co(A_{f,\lambda})}=A_{\co(f),\lambda}$, and for $\lambda$ sufficiently negative we have $$A_{f_1\ast \ldots \ast f_n,\lambda}=\frac{1}{n}(A_{f_1,\lambda}+\ldots+A_{f_n,\lambda}).$$

The study of how close a Minkowski sum is to its convex hull was started by Starr-Shapley-Folkman \cite{starr1969quasi} and Emerson-Greenleaf \cite{emerson1969asymptotic}, who showed that if $A_1,\ldots,A_n$ are subsets of the unit ball in $\mathbb{R}^k$, then the Hausdorff distance between the Minkowski averages $\frac{1}{n}(A_1+\ldots+A_n)$ and $\frac{1}{n}(\co(A_1)+\ldots+\co(A_n))$ is bounded above by $\sqrt{k}n^{-1}$. Of particular interest for us will be when $A_1=\ldots=A_n=A$, where we are concerned with how close $\frac{1}{n}(A+\ldots+A)$ is to $\co(A)$; for this equal sets case we refer the reader to the extensive survey \cite{fradelizi2018convexification}.

Ruzsa \cite[Theorem 5]{ruzsa1997brunn} showed that there is a constant $D_k$ such that for $A\subset \mathbb{R}^k$ of positive measure (taking the outer Lebesgue measure everywhere) and $n>D_k\frac{|\co(A)|}{|A|}$, we have $|\frac{1}{n}(A+\ldots+A)|\ge \left(1-\frac{D_k}{n}\cdot\frac{|\co(A)|}{|A|}\right)^k|\co(A)|$. In another direction, resolving a conjecture of Figalli and Jerison \cite{Semisum,FigJerJems} on the stability of the Brunn-Minkowski inequality for homothetic sets, the present authors \cite{HomoBM} showed that for $t\in (0,1)$ there are constants $c_k(t)$ and $d_k(t)$ such that for subsets $A\subset \mathbb{R}^k$ of positive measure, $|tA+(1-t)A| \ge c_{k}(t)|\co(A)\setminus A|$ provided $|(tA+(1-t)A)\setminus A|\le d_{k}(t)|A|$.

A nice feature of this last result is that for $A=A_{f,\lambda}$ the hypograph of a function, the $d_{k+1}(t)$ condition is always satisfied provided we take $\lambda$ to be sufficiently negative. Taking $t=\frac{1}{n}$ allows us to conclude, writing $f^{\ast n}$ for $f\ast \ldots \ast f$, that there exist positive constants $c_{k,n}$ such that\footnote{Formally we work with the ``upper Lebesgue integral'' to avoid the issue of the measurability of $f_1\ast\ldots \ast f_n$.} 
$$\int_C f^{\ast n}(x)-f(x)dx \ge c_{k,n}\int_C \co(f)(x)-f(x) dx$$
(see Appendix A, where we also give an alternate self-contained proof of this particular inequality).

\begin{comment}
\begin{thm}[vH,S,T]
For any $A\subset \mathbb{R}^{k+1}$ of positive measure and $t\in (0,1)$, there are constants $c(k,t)$ and $d(k,t)$ such that if $|tA+(1-t)A|-|A|\le d(k,t)|A|$, then
$$|tA+(1-t)A|-|A|\ge c(k,t)|\co(A)\setminus A|,$$
where we write $\co(A)$ for the convex hull of $A$.
\end{thm}

The existence of the constant $c_{k,n}$ for sup-convolution then follows by applying this theorem to the set $A=A_{f,-N}$ where $$A_{f,\lambda}=\{(x,y)\in C\times \mathbb{R}: \lambda \le y \le f(x)\},$$
$t=\frac{1}{n}$, and $N\le \min(f)$ is sufficiently small so that the $d(k,\frac{1}{n})$ bound is satisfied (in fact this shows that we have a lower bound even if we restricted in the $n$-fold sup-convolution that $x_1=\ldots=x_{n-1}$).
\end{comment}

 The constants $c_{k,n}$ one obtains in this way however are not optimal. Our first theorem establishes the optimal constants for $k\le 3$, making progress towards Question 1.8 from \cite{copro} which asked an analogous question in the discrete setting with $n=2$.

\begin{thm}\label{first_main}
If $f:C\to \mathbb{R}$ is a bounded measurable function with $C\subset \mathbb{R}^k$ a compact convex domain with $k\le 3$, and $n\ge 1$, then
$$\int_C f^{\ast n}(x)-f(x)dx \ge c_{k,n}\int_C \co(f)(x)-f(x) dx$$
with
$$c_{k,n}=\begin{cases}\frac{n-1}{n}&k=1\\\frac{(2n-1)(n-1)}{2n^2}&k=2\\\frac{(n-1)^2}{n^2}&k=3.\end{cases}$$
\begin{comment}
\begin{align*}
    c_{1,m}&= \frac{m-1}{m}\\
    c_{2,m}&=\frac{m-1}{m}\cdot \frac{1}{m^2}\cdot\binom{m+1}{2}+\frac{m-2}{m}\cdot \frac{1}{m^2}\cdot \binom{m}{2}=\frac{(2m-1)(m-1)}{2m^2}\\
    c_{3,m}&=\frac{m-1}{m}\cdot\frac{1}{m^3}\cdot\binom{m+2}{3}+\frac{m-2}{m}\cdot \frac{4}{m^3}\binom{m+1}{3}+\frac{m-3}{m}\cdot \frac{1}{m^3}\binom{m}{3}=\frac{(m-1)^2}{m^2}.
\end{align*}
\end{comment}
\end{thm}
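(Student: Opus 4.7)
The plan is to reduce to a polytopal, piecewise-affine setting. Standard approximation arguments let us assume $C$ is a polytope and $\co(f)$ is affine on each simplex of a triangulation $\mathcal{T}$ of $C$ whose vertices all lie in the contact set $\{f = \co(f)\}$, since the extreme points of the hypograph of $\co(f)$ sit over that contact set. Summing across $\mathcal{T}$, the global inequality reduces to the per-simplex statement on a fixed $k$-simplex $\sigma$ with $\co(f)|_\sigma$ affine and $f(v_i) = \co(f)(v_i)$ at the vertices $v_0, \ldots, v_k$:
$$\int_\sigma (\co(f) - f^{\ast n})(x)\,dx \le (1 - c_{k,n}) \int_\sigma (\co(f) - f)(x)\,dx.$$

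On $\sigma$, I would use the standard $\tfrac{1}{n}$-subdivision into $n^k$ hypersimplices grouped into types $t = 1, \dots, k$, where $t$ is the minimal number of non-vertex points needed in a barycentric representation of an interior point of that hypersimplex. The counts are $n$ for $k=1$; $\binom{n+1}{2}$ upright (type $1$) plus $\binom{n}{2}$ inverted (type $2$) for $k = 2$; and $\binom{n+2}{3},\, 4\binom{n+1}{3},\, \binom{n}{3}$ for types $1, 2, 3$ in $k = 3$. On a type-$t$ hypersimplex $\tau$ with vertex multiplicities $(a_0, \dots, a_k)$, $\sum_i a_i = n-t$, every interior point $p$ admits representations $p = \tfrac{1}{n}\bigl(\sum_i a_i v_i + \sum_{j=1}^t w_j\bigr)$ for any $w_1, \dots, w_t \in \sigma$ with $\sum_j w_j = np - \sum_i a_i v_i$. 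Affinity of $\co(f)$ on $\sigma$ and the contact condition at vertices then yield
$$\co(f)(p) - f^{\ast n}(p) \le \frac{1}{n} \sum_{j=1}^t g(w_j(p)), \qquad g := \co(f) - f.$$

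The heart of the proof is to construct, for each hypersimplex $\tau$ of type $t$, a $t$-tuple of affine bijections $w_j : \tau \to \sigma$ of Jacobian $n^k$ satisfying the linear sum constraint. For upright ($t = 1$), $w(p) = np - \sum_i a_i v_i$ works directly. For $t \ge 2$ such bijections can be found by choosing vertex-to-vertex matchings compatible with the constraint $\sum_j Dw_j = nI$: for instance, in $k = 2$, $n = 2$ on the central inverted triangle $M$ with vertices $m_{01}, m_{02}, m_{12}$, the assignment $m_{01} \mapsto v_1, m_{02} \mapsto v_0, m_{12} \mapsto v_2$ defines an affine bijection $w_1$ of Jacobian $4 = n^k$, and then $w_2 = 2\,\mathrm{id} - w_1$ automatically sends $m_{01}, m_{02}, m_{12}$ to $v_0, v_2, v_1$, also with Jacobian $n^k$. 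Given such $w_j$'s, change of variables gives $\int_\tau g(w_j(p))\,dp = n^{-k} \int_\sigma g$, hence $\int_\tau (\co(f) - f^{\ast n}) \le \tfrac{t}{n^{k+1}} \int_\sigma g$. Summing over all sub-simplices yields
$$\int_\sigma (\co(f) - f^{\ast n}) \le \frac{1}{n^{k+1}} \Big(\sum_t t \cdot \mathrm{count}_t\Big) \int_\sigma g = (1 - c_{k,n}) \int_\sigma g,$$
using $\sum_t t \cdot \mathrm{count}_t = n,\, \tfrac{n(3n-1)}{2},\, n^2(2n-1)$ in $k = 1, 2, 3$, which matches $n^{k+1}(1 - c_{k,n})$.

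The main obstacle is producing the required matchings for all hypersimplex types in $k \le 3$, verifying simultaneously the Jacobian condition and the image-in-$\sigma$ condition. Each inverted or intermediate hypersimplex can be reduced by an affine change of coordinates to a canonical model at $n = k$ or $n = 2$, but the $k = 3$ case still demands careful analysis of the three sub-tetrahedron shapes, and in particular of the 4-fold family of type-$2$ tetrahedra around each octahedral cell of the subdivision. This is the sense in which the abstract's ``decomposition of polytopal approximations of $C$ into hypersimplices according to the geometry of the set of points where $\co(f)$ is close to $f$'' provides the key geometric insight.
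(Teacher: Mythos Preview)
Your overall strategy matches the paper's: reduce to a single simplex $\sigma$ with $\co(f)$ affine and $f=\co(f)$ at the vertices, subdivide $\sigma$ into hypersimplices of types $1,\ldots,k$, and on each type-$t$ piece express points as an $n$-average using $n-t$ vertices of $\sigma$ plus $t$ free points, then integrate. Your arithmetic for the constants is also correct.

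The genuine gap is your assertion that for each type-$t$ sub\-tetrahedron $\tau$ one can find \emph{affine} bijections $w_1,\ldots,w_t:\tau\to\sigma$ of Jacobian $n^k$ with $\sum_j w_j(p)=np-\sum_i a_i v_i$. For $k=3$, $t=2$ this is impossible. Take $n=2$, so the constraint reads $w_1+w_2=2\,\mathrm{id}$. An affine bijection $\tau\to\sigma$ must send vertices of $\tau$ to vertices of $\sigma$, and then $2p=w_1(p)+w_2(p)$ forces each vertex $p$ of $\tau$ to be an edge midpoint $m_{ab}=\tfrac{v_a+v_b}{2}$ with $\{w_1(p),w_2(p)\}=\{v_a,v_b\}$. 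Regard the four vertices of $\tau$ as four edges of $K_4$ on $\{0,1,2,3\}$; the requirement that $p\mapsto w_1(p)$ and $p\mapsto w_2(p)$ both give permutations of $\{v_0,\ldots,v_3\}$ is exactly that this $4$-edge multigraph admits an orientation with all in- and out-degrees equal to $1$, i.e.\ is $2$-regular, i.e.\ is a $4$-cycle. But the four midpoints on any $4$-cycle of $K_4$ are precisely an equatorial square of the octahedron $\tfrac12 P_{3,2}$ and hence coplanar, so $\tau$ degenerates. Conversely, in any triangulation of the octahedron along a diagonal, each of the four sub\-tetrahedra contains that diagonal, giving a vertex of degree $3$ in the edge-graph, and no such $(w_1,w_2)$ exists.

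The paper's fix is to drop ``affine'' and work with \emph{piecewise}-affine maps, packaged as the notion of an ``$m$-averageable'' set: maps $H_1,\ldots,H_m:T\to T$ that are only generically bijective with Jacobian $1$, whose average is a constant-Jacobian bijection $T\to \tfrac1m P_{k,m}$. For the octahedron the paper decomposes $T$ (not the octahedron) into four simplices $R_{i(i+1)}=\co(e_i,e_{i+1},m_{13},m_{24})$, takes $H_1=\mathrm{id}$, and lets $H_2$ rotate $R_{i(i+1)}\mapsto R_{(i+1)(i+2)}$ affinely on each piece. In your language the resulting $w_1,w_2:\tfrac12 P_{3,2}\to T$ are bijective but only piecewise affine, and on each of the four image simplices $S_{i(i+1)(i+2)}\subset \tfrac12 P_{3,2}$ they land in \emph{different} quarters of $\sigma$, not all of $\sigma$. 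Your per-piece identity $\int_\tau g(w_j(p))\,dp=n^{-k}\int_\sigma g$ then fails for individual $\tau$, but summing over the four pieces recovers it because the images tile $\sigma$ exactly once. This piecewise construction is the actual content of the $k=3$ case, and it is not a matter of ``careful analysis'' of vertex matchings: the global affine scheme you outline is combinatorially obstructed.
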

This is sharp, taking $f$ the indicator function on the vertices of $C=T$ a simplex. In any dimension $k$, letting $e_1,\ldots,e_{k+1}$ be the standard basis vectors in $\mathbb{R}^{k+1}$ and identifying $C=T$ with the convex hull of $ne_1,\ldots,ne_{k+1}$ we will see that the level sets of this particular $f^{\ast n}$ induce a subdivision of $C$ into hypersimplices, where translates of the $m$'th $k$-dimensional hypersimplex $P_{k,m}:=[0,1]^{k+1}\cap \{\sum x_i=m\}$ appear $\binom{n+k-m}{k}$ times (see \Cref{HypersimplexSection}).

For example for $k=2$ (depicting the case $n=4$ below), $f^{\ast n}$ takes value $\frac{n-1}{n}$ in the shaded region, the union of $\binom{n+1}{2}$ translates of the triangle $P_{2,1}$ \begin{tikzpicture}[scale=0.25]
\filldraw[color=gray] (0,0)--(1,1)--(2,0)--cycle;
\end{tikzpicture}, and $\frac{n-2}{n}$ in the unshaded region, the union of $\binom{n}{2}$ translates of the triangle $P_{2,2}$ \begin{tikzpicture}[scale=0.25]
\draw[color=gray] (0,1)--(1,0)--(2,1)--cycle;
\end{tikzpicture}:
\begin{center}
\begin{tikzpicture}[scale=0.25]
\draw (0,0)--(4,4)--(8,0)--(0,0);

\draw (1,1)--(7,1);
\draw (2,2)--(6,2);
\draw (3,3)--(5,3);

\draw (6,0)--(7,1);
\draw (4,0)--(6,2);
\draw (2,0)--(5,3);

\draw (6,0)--(3,3);
\draw (4,0)--(2,2);
\draw (2,0)--(1,1);

\filldraw[color=gray] (0,0)--(1,1)--(2,0)--cycle;
\filldraw[color=gray] (2,0)--(3,1)--(4,0)--cycle;
\filldraw[color=gray] (4,0)--(5,1)--(6,0)--cycle;
\filldraw[color=gray] (6,0)--(7,1)--(8,0)--cycle;

\filldraw[color=gray] (1,1)--(2,2)--(3,1)--cycle;
\filldraw[color=gray] (3,1)--(4,2)--(5,1)--cycle;
\filldraw[color=gray] (5,1)--(6,2)--(7,1)--cycle;

\filldraw[color=gray] (2,2)--(3,3)--(4,2)--cycle;
\filldraw[color=gray] (4,2)--(5,3)--(6,2)--cycle;

\filldraw[color=gray] (3,3)--(4,4)--(5,3)--cycle;
\end{tikzpicture}
\end{center}
The shaded regions are precisely those parts of $T$ whose points can be expressed as $\frac{x_1+\ldots+x_n}{n}$ with all but one of the $x_i$ a vertex of $T$, and the remaining unshaded regions can be expressed with all but two of the $x_i$ a vertex of $T$.

For $k=3$, we can subdivide $T$ into $\binom{n+2}{3}$ translates of $\frac{1}{n}T=P_{3,1}$, $\binom{n+1}{3}$ translates of the octahedron $P_{3,2}$, and $\binom{n}{3}$ translates of $-P_{3,1}=P_{3,3}$, on which $f^{\ast n}$ takes the values $\frac{n-1}{n}$, $\frac{n-2}{n}$, $\frac{n-3}{n}$ respectively. The partition is according to whether the maximum number of vertices of $T$ which can be used to express the point as an $n$-average is $n-1,n-2,$ or $n-3$ respectively.

To prove \Cref{first_main}, we pass to a piecewise-linear approximation and then triangulate according to the domains of linearity of $\co(f)$. On each simplex $T$ we prove a sharp inequality relating $\int_R \co(f)(x)-f^{\ast n}(x) dx$ and $\int_T \co(f)(x)-f(x) dx$ for $R$ ranging over the hypersimplices in the subdivision alluded to above. This in turn is encompassed in our notion of an ``$m$-averageable'' subset of $T$ (Section 4), and showing certain hypersimplices are ``$m$-averageable'' allows us to conclude.

We make the following conjecture for arbitrary $k,n \ge 1$. In what follows, write $A(k,\ell)$ for the Eulerian number counting permutations of $S_{k}$ with $\ell$ descents.
\begin{comment}
These decompositions generalize for all $k$ to a decomposition of $\mathbb{R}^k$ into hypersimplices, obtained by identifying $\mathbb{R}^k$ with some hyperplane $\sum x_i=t$ with $t\in \mathbb{Z}$ in $\mathbb{R}^{k+1}$, and partitioning with all hyperplanes $x_i=\ell$ with $i\in \{1,\ldots,k+1\}$ and $\ell\in \mathbb{Z}$. Taking $t=n$, the decomposition associated to $f^{\ast n}$ is naturally identified with the induced decomposition of the $k$-dimensional simplex $T$ with vertices $\{ne_i\}_{i=1}^{k+1}$, where $e_i$ are standard basis vectors in $\mathbb{R}^{k+1}$.

Indeed, the regions of $\mathbb{R}^k$ cut out by these hyperplanes are obtained as the intersection of a unit hypercube $[0,1]^{k+1}+x$ with $x\in \mathbb{Z}^k$ with the hyperplane $\sum x_i=t$, which if not empty or a single point is of the form $P_{k,m}+x$ for some $m\in \{1,\ldots,k\}$ where $$P_{k,m}:=[0,1]^{k+1} \cap \left\{\sum x_i=m\right\}$$ is the $m$'th $k$-dimensional hypersimplex. Taking $t=n$, for fixed $m$ the union of the regions of the form $P_{k,m}+x$ lying in $T$ are then easily shown to be those points for which the maximum number of vertices of $T$ which can be used to express them as an $n$-average is $n-m$.
\end{comment}

\begin{conj}\label{constantconj}
If $k, n\ge 1$ and $f:C\to \mathbb{R}$ is a bounded measurable function with $C\subset \mathbb{R}^k$ a compact convex domain, then we have
$$\int_C f^{\ast n}(x)-f(x)dx \ge c_{k,n}\int_C \co(f)(x)-f(x)dx,$$
where
$$c_{k,n}=\frac{1}{n^k}\sum_{m=1}^k\frac{n-m}{n}\binom{n+k-m}{k}A(k,m-1)=\frac{k+1}{n^{k+1}}(1^k+\ldots+(n-1)^k).$$
\end{conj}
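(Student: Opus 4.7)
The plan is to extend the proof of \Cref{first_main} to all $k$, the principal new ingredient being an averageability statement for arbitrary hypersimplices. First, by the piecewise-linear approximation of \Cref{first_main}, one reduces to the case where $f$ is affine on each simplex $T$ of a triangulation $\mathcal{T}$ of $C$ on which $\co(f)$ is also affine with $\co(f)=f$ at the vertices of $T$. Since the inequality is additive over $\mathcal{T}$ and $(f|_T)^{\ast n}\le f^{\ast n}|_T$, it suffices to work on a single simplex $T$; subtracting $\co(f)|_T$, assume $\co(f)\equiv 0$, $f\le 0$, and $f=0$ at the vertices of $T$. Identifying $T$ with the convex hull of $ne_1,\ldots,ne_{k+1}$, the hypersimplex subdivision of $T$ gives $n^k$ unit hypersimplex pieces, with $\binom{n+k-m}{k}$ translates of $P_{k,m}$ for each $m\in\{1,\ldots,k\}$.

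The conjecture reduces, by summing over these pieces and using the identity $\frac{1}{n^{k+1}}\sum_m m\binom{n+k-m}{k}A(k,m-1)=1-c_{k,n}$, to proving the following ``$m$-averageability'' bound for each translate $R$ of $P_{k,m}$:
$$\int_R\co(f)(x)-f^{\ast n}(x)\,dx\le\frac{m}{n}\cdot\frac{|R|}{|T|}\int_T\co(f)(x)-f(x)\,dx.$$
Parametrize $z\in R$ as $z=\frac{1}{n}(V+w)$ with $V$ a fixed sum of $n-m$ vertices of $T$ and $w\in P_{k,m}$. Any decomposition $w=q_1+\ldots+q_m$ with $q_j\in T$ gives $f^{\ast n}(z)\ge\frac{1}{n}\sum_jf(q_j)$, since the terms from $V$ vanish. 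Thus the bound follows from the existence of a probability measure $\mu$ on $\{(w,q_1,\ldots,q_m):w=\sum_j q_j,\,q_j\in T,\,w\in P_{k,m}\}$ whose $w$-marginal is uniform on $P_{k,m}$ and whose $q_j$-marginals are uniform on $T$; testing against $f=-\mathbf{1}_A$ for $A\subset T\setminus\{\text{vertices}\}$ shows that uniform $q_j$-marginals are also necessary.

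Constructing this coupling is the principal obstacle. When $P_{k,m}$ is itself a simplex (i.e.\ $m=1$ or $m=k$), one can take each $q_j:P_{k,m}\to T$ to be an affine volume-preserving bijection summing to the identity; such bijections arise by cyclically permuting a fixed ordering of the decompositions of the vertices of $P_{k,m}$ as sums of vertices of $T$ (for $k=2,m=2$ the decompositions $(a,b),(c,a),(b,c)$ of the vertices $a+b,a+c,b+c$ of $P_{2,2}$ give two affine volume-preserving bijections $P_{2,2}\to T$). For general $m$, I would decompose $P_{k,m}$ via its standard unimodular triangulation into $A(k,m-1)$ simplices each congruent to the unit $T$, and build the coupling piecewise: on each sub-simplex choose an affine volume-preserving decomposition into $(q_1,\ldots,q_m)\in T^m$, then verify that the $m$ induced pushforwards on $T$, summed over all $A(k,m-1)$ sub-simplices, yield the uniform measure with density $|P_{k,m}|/|T|=A(k,m-1)/k!$. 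The crux is a combinatorial compatibility statement --- roughly, a bijection between permutations of $[k+1]$ with $m-1$ descents and certain ordered tilings of $T$ --- which for $k\le 3$ reduces to the octahedron case handled in \Cref{first_main}, but appears to demand a genuinely new combinatorial input for $k\ge 4$.
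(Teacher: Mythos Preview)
This statement is labeled a \emph{conjecture} in the paper; the paper does not prove it for $k\ge 4$, so there is no proof to compare against. Your proposal is therefore being compared to the paper's program, and it matches that program exactly: the reduction to $C=T$ a simplex with $f\le 0$ vanishing at the vertices is \Cref{mainreduction}; the hypersimplex subdivision and the identity reducing the conjecture to ``$\frac{1}{m}P_{k,m}$ is $m$-averageable for all $m\le\min(k,n)$'' is \Cref{repeatedprop}; and the easy cases $m=1$, $m=k$ are handled in the first lemma of Section~5. Your coupling formulation (a probability measure on $\{(w,q_1,\ldots,q_m)\}$ with uniform marginals) is a mild and natural relaxation of the paper's $m$-averageability, which demands deterministic Jacobian-$1$ bijections $H_j:T\to T$; the relaxation is still sufficient for the inequality.

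The gap you yourself name --- constructing the coupling for $2\le m\le k-1$ when $k\ge4$ --- is precisely the open part of the conjecture, so your proposal is not a proof. Two remarks on your sketch for this step. First, the paper's treatment of the single nontrivial case it does settle, $(k,m)=(3,2)$, does \emph{not} proceed via the unimodular triangulation of the octahedron; instead it decomposes $T$ (not $\frac12 P_{3,2}$) into four simplices $R_{i(i+1)}$ sharing the segment from $\frac{e_1+e_3}{2}$ to $\frac{e_2+e_4}{2}$, and takes $H_2$ to be the piecewise-linear map cycling them. So the route you propose is not the one validated by the $k=3$ argument. Second, your piecewise-affine scheme on the unimodular triangulation of $P_{k,m}$ presumes that on each cell $\sigma$ one can order the expression of every vertex as a sum of $m$ standard basis vectors so that each coordinate map $q_j:\sigma\to T$ is itself a \emph{bijection}; this already forces the $j$-th chosen basis vectors across the $k+1$ vertices of $\sigma$ to be pairwise distinct, which is a nontrivial constraint, and even granting it, arranging that the resulting pushforwards sum to a \emph{constant} density on $T$ is the genuine obstruction. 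Without supplying that combinatorial input, the proposal remains at the level of the paper's own conjecture.
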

If true, this would be sharp by taking $f$ the indicator function on the vertices of $C=T$ a simplex (see \Cref{HypersimplexSection}). Here $\frac{n-m}{m}$ is the value of $f^{\ast n}$ on each hypersimplex $P_{k,m}+x$, $\binom{n+k-m}{k}$ is the number of such hypersimplices, and $\frac{A(k,m-1)}{n^k}$ is the volume ratio of $P_{k,m}$ to $T$.

\begin{rmk}
Omitting the $\frac{n-m}{m}$ factor, we obtain a geometric proof of the Worpitzky identity $n^k=\sum \binom{n+k-m}{k}A(k,m-1).$ A similar observation was recently exploited by Early \cite[Section 3]{1611.06640} to categorify the Worpitzky identity via the representation theory of the symmetric group.
\end{rmk}

We also show the following asymptotic result for fixed $k$ as $n\to \infty$.
\begin{thm}
\label{AsympTheorem}
For any $k\ge 1$ and $n \ge k+1$, we have $c_{k,n}\ge 1-\binom{n}{k}\frac{k^{k+1}}{n^{k+1}}=1-O(\frac{1}{n})$.
\end{thm}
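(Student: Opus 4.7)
The plan is to establish the local inequality $\int_T (\co(f)-f^{\ast n})\,dx \le \binom{n}{k}(k^{k+1}/n^{k+1})\int_T (\co(f)-f)\,dx$ on each simplex $T$ of a suitable triangulation of $C$, and then sum. After reducing to piecewise linear $f$ via approximation, triangulate $C$ using projections of the top faces of the convex hull of the hypograph of $f$; on each simplex $T$ of the resulting triangulation, $\co(f)$ is linear and satisfies $\co(f)(v_i)=f(v_i)$ at each vertex $v_i$, because the vertices of the top faces are contact points $\co(f)=f$. Summing the local inequality over the triangulation and rearranging gives the theorem.

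To prove the local inequality, I cover $T$ by $\binom{n}{k}$ small simplices. For each tuple $m'=(m'_0,\ldots,m'_k)\in\mathbb{Z}_{\ge 0}^{k+1}$ with $\sum_i m'_i=n-k$, define $S_{m'}:=\{\tfrac{1}{n}(\sum_i m'_i v_i+k y) : y\in T\}\subseteq T$, a simplex of volume $(k/n)^k |T|$. For $x\in S_{m'}$, the point $y(x,m'):=(nx-\sum_i m'_i v_i)/k$ lies in $T$, and the decomposition $x=\tfrac{1}{n}(\sum_i m'_i v_i+k\,y(x,m'))$ as an $n$-average yields $f^{\ast n}(x)\ge \tfrac{1}{n}[\sum_i m'_i f(v_i)+k f(y)]$. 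Subtracting from $\co(f)(x)=\tfrac{1}{n}[\sum_i m'_i f(v_i)+k\,\co(f)(y)]$ (by linearity of $\co(f)$ on $T$ combined with $\co(f)(v_i)=f(v_i)$) gives the pointwise estimate $\co(f)(x)-f^{\ast n}(x)\le (k/n)(\co(f)-f)(y)$. Integrating over $x\in S_{m'}$ via the affine change of variables $y\mapsto x$ (with Jacobian $(k/n)^k$) yields $\int_{S_{m'}}(\co(f)-f^{\ast n})\,dx\le (k/n)^{k+1}\int_T(\co(f)-f)\,dy$.

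Summing over all $\binom{n}{k}$ tuples $m'$ and denoting $N(x):=\#\{m':x\in S_{m'}\}$, we obtain
\[
\int_T N(x)(\co(f)-f^{\ast n})\,dx\le \binom{n}{k}\Bigl(\frac{k}{n}\Bigr)^{k+1}\int_T(\co(f)-f)\,dy,
\]
so the local inequality follows once we verify $N(x)\ge 1$ for every $x\in T$. Writing $x=\sum_i \lambda_i v_i$ in barycentric coordinates, the condition $x\in S_{m'}$ is equivalent to $m'_i\le \lfloor n\lambda_i\rfloor$ for every $i$; and since the $k+1$ fractional parts of the $n\lambda_i$ each lie in $[0,1)$ and sum to an integer strictly less than $k+1$ (hence at most $k$), we have $\sum_i \lfloor n\lambda_i\rfloor \ge n-k$, so a valid $m'$ with $\sum_i m'_i=n-k$ and $m'_i\le\lfloor n\lambda_i\rfloor$ can be constructed greedily. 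The principal obstacle is verifying this combinatorial feasibility cleanly and handling the piecewise linear approximation to ensure $\co(f)(v_i)=f(v_i)$ at each vertex of the triangulation; the rest is a clean change-of-variables argument that cleverly packages the $k$ ``surplus'' points of an $n$-average representation of $x$.
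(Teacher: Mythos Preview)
Your proof is correct and essentially identical to the paper's. The paper performs the same reduction (Proposition~2.1) to simplices $T$ on which $\co(f)$ is linear with $\co(f)(v_i)=f(v_i)$ at the vertices, establishes the same covering $T=\bigcup_{v\in\mathcal{B}_{k,n-k}}\frac{kT+v}{n}$ via the floor argument you describe, and then sums the same change-of-variables estimate $\int_{S_{m'}}(\co(f)-f^{\ast n})\le (k/n)^{k+1}\int_T(\co(f)-f)$ over the $\binom{n}{k}$ simplices; the only cosmetic difference is that the paper first subtracts the linear function $\co(f)$ so that $\co(f)\equiv 0$, whereas you carry it along explicitly.
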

This is optimal up to the constant on $\frac{1}{n}$, which this theorem shows can be taken to be $\frac{k^{k+1}}{k!}=e^{O(k)}$, though our conjectured extremal example gives a constant of $\frac{k+1}{2}$.

Finally, we consider the sup-convolution of distinct functions $f,g$, showing that $f$ is close to $\co(f)$ provided $f\ast g$ is close to $\frac{f+g}{2}$.
\begin{thm}\label{second_main}
If $f,g:C\to \mathbb{R}$ are bounded measurable functions with $C\subset \mathbb{R}^k$ a compact convex domain and $k\le 3$ then
$$\int_C f*g(x)-\frac{f(x)+ g(x)}{2}dx\geq \frac{k+1}{2^{k+1}}\int_C \co(f)(x)-f(x)dx.$$
\end{thm}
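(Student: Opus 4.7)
The plan is to mirror the proof strategy for Theorem~\ref{first_main} in the $n=2$ case, adapted to the two-function setting. First, by a standard density argument I would reduce to the case where $f$ (and without loss of generality $g$) is piecewise-linear, using the stability of $\co f$ and of sup-convolution under appropriate limits. Then I would triangulate $C$ by the domains of linearity of $\co f$, so that on each simplex $T$ of the triangulation $\co f|_T$ is affine. Since both sides of the desired inequality are invariant under subtracting a common affine function from $f$ and $g$, I may assume $\co f \equiv 0$ on $T$, which forces $f \le 0$ on $T$ and $f(v_i) = 0$ at the vertices $v_0,\ldots,v_k$ of $T$. The task reduces to proving the local inequality
\[
\int_T \Bigl(f*g - \tfrac{1}{2}(f+g)\Bigr)\, dx \ge \tfrac{k+1}{2^{k+1}} \int_T (-f)\, dx
\]
on each such simplex $T$, and summing over the triangulation.

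Next, I would subdivide $T$ into the $n=2$ hypersimplices introduced in Theorem~\ref{first_main}: the $k+1$ corner simplices $R_i := \{z \in T : \lambda_i(z) \ge 1/2\}$ at each vertex $v_i$, each of volume $|T|/2^k$, and the central $P_{k,2}$-region $R_c$ (a single hypersimplex for $k \le 3$). On each corner $R_i$, for $z \in R_i$ I write $z = (v_i + w)/2$ with $w = 2z - v_i \in T$, which gives the bound $f*g(z) \ge \tfrac{1}{2}(f(v_i) + g(w)) = \tfrac{1}{2} g(w)$; integrating with the substitution $w = 2z - v_i$ (Jacobian $2^{-k}$) yields $\int_{R_i} f*g\, dz \ge 2^{-(k+1)} \int_T g\, dw$, and summing over $i$ accounts for the factor $\tfrac{k+1}{2^{k+1}} \int_T g$ that one reads off from the extremal example where $f$ and $g$ are both indicators on the vertices of $T$.

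For the central region $R_c$, the vertex-midpoint bound fails, and I would adapt the $m$-averageable framework of Section~4 to the two-function case. The key idea is that for $z \in R_c$, even though $z$ cannot be expressed as $(v_i + w)/2$ with $w \in T$, one can exploit the hypersimplex structure together with the fact that $f$ vanishes at the vertices (and is close to zero on the sub-level-sets of $\co f - f$ driving $\int_T (\co f - f)$) to extract a refined lower bound on $\int_{R_c} f*g$. I expect this bound to take the form $\int_{R_c} f*g\, dz \ge \alpha \int_T f + \beta \int_T g + \gamma \int_{R_c}(f+g)$, with coefficients chosen so that the contributions from $R_c$ and $\cup_i R_i$ assemble, via the normalization $\co f \equiv 0$, into the desired local bound with equality in the extremal case.

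The main obstacle is exactly this central-region estimate. A naive attempt using only $f*g \ge (f+g)/2$ on $R_c$ is insufficient: one can construct $f$ concentrating its negative mass in $R_c$ so that the combined corner-plus-trivial-center bound violates the required inequality, even though the true $f*g$ is larger because the sup can be taken over points where $f$ is close to $\co f$ but far from the vertices. The $m$-averageable framework handles this for one function by refining the decomposition of $T$ according to how many coordinates of $z$ exceed various thresholds; extending it to two distinct functions requires tracking the interplay between $f$ and $g$ carefully, in particular verifying that the extremal configuration is realized by $g=f$ equal to the indicator on the vertices, which dictates the correct coefficients in the central-region estimate.
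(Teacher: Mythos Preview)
Your overall architecture matches the paper's: reduce to a simplex $T$ with $\co(f)\equiv 0$, split $T$ into the $k+1$ corner simplices and the central hypersimplex $\tfrac{1}{2}P_{k,2}$, and handle the corners exactly as you describe. The gap is in the central region. You expect a bound of the form $\int_{R_c}f\ast g \ge \alpha\int_T f+\beta\int_T g+\gamma\int_{R_c}(f+g)$ and speculate that obtaining it requires sub-level-set analysis of $\co(f)-f$ and careful tracking of the ``interplay'' between $f$ and $g$. Neither is needed, and this is where your sketch stops short of a proof.

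The point of $2$-averageability is purely geometric and already handles two functions with no adaptation: if $H_1,H_2:T\to T$ are Jacobian-$1$ bijections with $\frac{H_1+H_2}{2}:T\to R_c$ a constant-Jacobian bijection, then for \emph{any} bounded $f,g$ one has
\[
\int_{R_c} f\ast g\,dz \;=\; \frac{|R_c|}{|T|}\int_T f\ast g\!\left(\tfrac{H_1(x)+H_2(x)}{2}\right)dx \;\ge\; \frac{|R_c|}{|T|}\cdot\frac{1}{2}\left(\int_T f(H_1(x))\,dx+\int_T g(H_2(x))\,dx\right)=\frac{|R_c|}{|T|}\cdot\frac{\int_T f+\int_T g}{2},
\]
using only $f\ast g\bigl(\tfrac{x_1+x_2}{2}\bigr)\ge \tfrac{1}{2}(f(x_1)+g(x_2))$ and the measure-preserving property of $H_1,H_2$. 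This is exactly Proposition~\ref{fgprop}, which is \emph{already} stated for two distinct functions. With $|R_c|/|T|=1-(k+1)2^{-k}$, adding this to your corner bounds $\sum_i\int_{R_i}f\ast g\ge (k+1)2^{-(k+1)}\int_T g$ gives $\int_T f\ast g\ge \bigl(\tfrac12-\tfrac{k+1}{2^{k+1}}\bigr)\int_T f+\tfrac12\int_T g$, and subtracting $\tfrac12\int_T(f+g)$ yields the claim. So the missing ingredient is not a new two-function argument but simply recognizing what $2$-averageability already delivers; the remaining work (done in Section~5) is to exhibit the maps $H_1,H_2$ for $\tfrac12 P_{k,2}$ when $k\le 3$.
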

The constant $c_{k,2}=\frac{k+1}{2^{k+1}}$ is again sharp, as for example we can take $f=g$ the indicator function on the vertices of $C=T$ a simplex.

In Section 2 we show that  \Cref{first_main}, \Cref{constantconj}, \Cref{AsympTheorem}, and \Cref{second_main} reduce to the case that $C=T$ is a simplex, $f=0$ on the vertices and $f\le 0$ otherwise. In Section 3 we construct our hypersimplex subdivision of $T$. In Section 4 we introduce a new geometric notion of ``$m$-averageable subsets of $T$'', and reduce to showing certain hypersimplices in $T$ are ``$m$-averageable''. In Section 5 we show that the relevant hypersimplices up to dimension $3$ are ``$m$-averageable'' and conclude \Cref{first_main} and \Cref{second_main}. In Section 6 we prove \Cref{AsympTheorem}. Finally, in Appendix A we show how the existence of a non-sharp constant in \Cref{constantconj} can be derived from \cite{HomoBM}, and we also give a quick self-contained proof.

\section{Reduction to Simplices}\label{reduction}

Here we reduce \Cref{first_main}, \Cref{constantconj}, \Cref{AsympTheorem}, and \Cref{second_main} to the case $C=T$ is a simplex, $f\le 0$, and $f=0$ at the vertices.

\begin{prop}\label{mainreduction}
The statements of \Cref{first_main}, \Cref{constantconj}, \Cref{AsympTheorem}, and \Cref{second_main}, respectively, are equivalent to the corresponding statements with the additional assumption that $C=T$ is a simplex, $f\le 0$, and $f=0$ at the vertices.
\end{prop}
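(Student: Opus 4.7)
The direction (special case) $\Rightarrow$ (general case) is the substantive one; the converse is trivial. The plan has three stages: affine invariance, triangulation in the piecewise-linear setting, and normalization on each simplex, followed by an approximation argument to pass from general bounded measurable $f$ to the piecewise-linear case.

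\emph{Affine invariance.} For any affine $\ell$, $(f+\ell)^{*n} = f^{*n} + \ell$ (since $\tfrac{1}{n}\sum_i \ell(x_i) = \ell(z)$ whenever the $x_i$ average to $z$) and $\co(f+\ell) = \co(f) + \ell$. Hence both $f^{*n} - f$ and $\co(f) - f$ are invariant under $f \mapsto f + \ell$, and analogously for the two-function setting of \Cref{second_main} under $(f,g) \mapsto (f+\ell, g+\ell)$.

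\emph{Triangulation in the piecewise-linear case.} Assume first that $C$ is a polytope and $f$ is piecewise linear, so $\co(f)$ is piecewise linear. Triangulate $C$ into simplices $T_1,\ldots,T_N$ on which $\co(f)$ is affine, using only vertices $v$ where $\co(f)(v) = f(v)$---namely the kinks of $\co(f)$ (where equality must hold since otherwise $\co(f)$ could be locally smoothed downward, contradicting minimality) and the extreme points of $C$ (where equality must hold since an extreme $v$ admits no nontrivial decomposition $v = \sum \lambda_i x_i$ with $x_i \neq v$, so lowering $\co(f)(v)$ to $f(v)$ preserves concavity). On each $T_i$ we have $(f^{*n})|_{T_i} \ge (f|_{T_i})^{*n}$ since sup-convolution over the larger domain $C$ dominates, and $\co(f|_{T_i}) = \co(f)|_{T_i}$: the right side is an affine function $\ge f|_{T_i}$, giving $\le$; for $\ge$, any concave $g \ge f|_{T_i}$ satisfies $g(\sum_j \lambda_j v_j) \ge \sum_j \lambda_j g(v_j) \ge \sum_j \lambda_j f(v_j) = \co(f)|_{T_i}(\sum_j \lambda_j v_j)$ where $v_j$ are the vertices of $T_i$. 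Summing the simplex-case inequality over each $T_i$ yields the inequality for $(C, f)$.

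\emph{Normalization.} On each $T_i$, the affine invariance with $\ell = \co(f)|_{T_i}$ replaces $f|_{T_i}$ with $\tilde f = f|_{T_i} - \ell \le 0$, which satisfies $\tilde f = 0$ at vertices, giving the special form.

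\emph{Approximation.} For general bounded measurable $f$ on compact convex $C$, approximate $(C, f)$ by $(C_j, f_j)$ with $C_j$ an inscribed polytope and $f_j$ piecewise linear sampled from $f$ on a refining triangulation, and pass to the limit in the piecewise-linear inequality. The main obstacle is that sup-convolution is not continuous under $L^1$ or pointwise convergence of the input; I expect this to be handled by first passing to the upper semicontinuous representative of $f$ (preserving $f^{*n}$ and $\co(f)$ almost everywhere) and then using monotone pointwise approximations together with the monotone convergence theorem.
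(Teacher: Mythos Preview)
Your triangulation and normalization steps are correct and coincide with the paper's final reduction: once $\co(f)$ is piecewise linear with $\co(f)(v)=f(v)$ at the vertices of a triangulation into simplices $T_i$, one has $\co(f|_{T_i})=\co(f)|_{T_i}$ and $(f|_{T_i})^{*n}\le f^{*n}|_{T_i}$, and subtracting the affine $\co(f)|_{T_i}$ normalizes each piece. (Note you only need $\co(f)$ piecewise linear here, not $f$ itself.)

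The genuine gap is the approximation step, which you yourself flag as an ``expectation''. Neither ingredient you propose works. First, a bounded measurable $f$ need not admit an upper semicontinuous a.e.\ representative: if $E\subset[0,1]$ is closed, nowhere dense, of positive measure, then any usc $g$ equal to $1_{E^c}$ a.e.\ would satisfy $g\ge 1$ everywhere (since $g=1$ on a dense set and $g$ is usc), contradicting $g=0$ a.e.\ on $E$. Passing instead to the usc \emph{envelope} can change $f$ on a set of positive measure, altering both $\int f$ and $f^{*n}$. Second, monotone pointwise limits of piecewise-linear functions are semicontinuous, so monotone PL approximation cannot reach a general bounded measurable target. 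And ``sampling $f$ on a refining triangulation'' gives no control whatsoever on $f_j^{*n}$ versus $f^{*n}$, since perturbing $f$ at a single point can change $f^{*n}$ on a set of full measure.

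The paper sidesteps these issues by never approximating $f$. It first enlarges $C$ to a polytope $P\supset C$, shifts $f$ so that $f\ge n$ on $C$, and extends by $0$ on $P\setminus C$; then any $n$-average using a point of $P\setminus C$ is below $n$, so $f^{*n}|_C$ is unchanged, and one may assume $C$ is a polytope with $f=0$ near $\partial C$. This makes $\co(f)$ continuous on $C$, hence uniformly approximable from above by a piecewise-linear concave $c$ with $\|c-\co(f)\|_\infty\le\varepsilon$. One then locates finitely many points $x_{i,j}$ whose $f$-values (via convex combinations) nearly witness $\co(f)$ at the nodes of $c$, and defines $f'=f+2\varepsilon\cdot 1_{\{x_{i,j}\}}$. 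The key is that $f'=f$ a.e.\ (so all integrals agree), $(f')^{*n}\le f^{*n}+2\varepsilon$, and $\co(f')$ is precisely the upper convex hull of the finitely many $(x_{i,j},f'(x_{i,j}))$, hence piecewise linear, without $f'$ itself being piecewise linear. The triangulation reduction then applies to $f'$, and one sends $\varepsilon\to 0$.
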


\begin{proof}

The reduction is divided in the following three steps. The first two steps will reduce to the situation that $f$ is nonnegative with piecewise-linear $\co(f)$ with finitely many domains of linearity. Considering a particular domain of linearity $T$, by subtracting the linear function $\co(f)|_T$, we deduce the result. We shall always focus on the reduction of \Cref{constantconj}, as the others follow in a similar way. 

\begin{clm}\label{firsthunterprop}
Suppose that \Cref{first_main} (resp. \Cref{constantconj}, \Cref{AsympTheorem},  \Cref{second_main}) is true when the domain $C$ is a polytope $P$, $f \ge 0$, and $f=0$ on a neighborhood of $\partial C$. Then \Cref{first_main} (resp. \Cref{constantconj}, \Cref{AsympTheorem}, \Cref{second_main}) is true.
\end{clm}
\begin{proof}
We prove this claim for \Cref{constantconj}, the other cases are similar. The inequality doesn't change if we scale $f$ or add a constant so assume that $f(x)\in [n,n+1]$ for all $x\in C$. Let $P_1,P_2,\ldots$ be a sequence of polytopes with $C\subset P_i^{\circ}$ (the interior of $P_i$) and $|P_i|\to |C|$. We extend $f$ to a function $f_i$ on $P_i$ by setting $f_i=0$ on $P_i\setminus C$. Then we note that for any $x\in C$, $f_i^{\ast n}(x)\ge f_i(x)\ge n$, but
$$\frac{f_i(x_1)+\ldots+f_i(x_n)}{n}\le \frac{(n+1)(n-1)}{n}<n$$ provided any $x_j\in \partial C$, so we conclude that $f_i^{\ast n}|_C=f^{\ast n}$. 

%%%We further claim that $\int_{P_i}\co(f_i) dx\to \int_C \co(f) dx$. Indeed, suppose that $0\in C$ and note that by convexity of the hypograph of $f$, for any $\epsilon$ we have $(1+\epsilon)f(x/(1+\epsilon))\ge f_\ell$ as soon as $P_\ell \subset (1+\epsilon)C$.

Thus as $\co(f_i)\ge \co(f)$,
\begin{align*}\int_C f^{\ast n}(x)-f(x) dx &\ge \int_{P_i} f_i^{\ast n}(x)-f_i(x) dx - |P_i \setminus C|\cdot ||f_i^{\ast n}||_{\infty} \\
&\ge c_{k,n}\int_{P_i} \co(f_i)(x)-f_i(x) dx - |P_i\setminus C|\cdot ||f_i^{\ast n}||_{\infty}\\
&\ge c_{k,n}\int_C\co(f)(x)-f(x) dx-|P_i\setminus C|\cdot ||f_i^{\ast n}||_{\infty}\to c_{k,n}\int_C \co(f)(x)-f(x) dx,\end{align*}
where in the last step we used that $||f_i^{\ast n}||_{\infty} = ||f||_{\infty} \le n+1$.
\end{proof}
\begin{clm}\label{secondhunterprop}
Suppose that \Cref{first_main} (resp. \Cref{constantconj}, \Cref{second_main}) is true when the domain $C$ is a polytope $P$ and $\co(f)$ is the upper convex hull of finitely many points $(r_i,f(r_i))$ (so is in particular piecewise linear). Then it is true when the domain $C$ is a polytope $P$, $f \ge 0$, and $f=0$ on a neighborhood of $\partial C$.
\end{clm}
\begin{proof}
We prove this claim for \Cref{constantconj}, the other cases are similar. Suppose $C=P$, $f\ge 0$, and $f=0$ on a neighborhood of $\partial C$ (but we do not necessarily know that $\co(f)$ is piecewise linear).

We'll show that $\co(f)$ is continuous at all points $x\in C$. First, suppose that $x\in C^{\circ}$. Then for $y\in C^{\circ}$, let $z_1,z_2$ be the points on $\partial C$ such that $z_1,x,y,z_2$ are collinear in that order. We have
\begin{align*}\co(f)(x) &\ge \frac{||x-z_1||}{||y-z_1||}\co(f)(y)+\frac{||x-y||}{||y-z_1||} \co(f)(z_1) \ge \frac{||x-z_1||}{||y-z_1||}\co(f)(y)\\
\co(f)(y) &\ge \frac{||y-z_2||}{||x-z_2||}\co(f)(x)+\frac{||x-y||}{||x-z_2||} \co(f)(z_2) \ge \frac{||y-z_2||}{||x-z_2||}\co(f)(x),
\end{align*}
where $||\cdot ||$ denotes the Euclidean norm, so $\co(f)(y)\to \co(f)(x)$ as $y \to x$.
Next, instead suppose that $x\in \partial C$. Then take any linear function $L$ which is $0$ at $x$ and positive with $\inf_{\text{supp}(f)} L >0$, which exists as $f(x)$ is supported on a compact subset of the interior of $C$. We may further assume that $L|_{\text{supp}(f)}>||f||_{\infty}$ by replacing $L$ with $(1+||f||_{\infty})(\inf_{\text{supp}(f)} L) ^{-1}L$. Then $\co(f)$ is sandwiched between the constant function $0$ and the continuous function $L$ which agree at $x$, which implies that $\co(f)(x)=0$ and $\co(f)$ is continuous at $x$.

%First note that $\co(f)$ is continuous. Indeed, because it is concave we already know it is in fact Lipschitz continuous on $C^{\circ}$, so it suffices to check continuity on $\partial C$. We claim that in fact $\co(f)(x)=0$ for $x\in \partial C$ and $\lim f(x_i)\to 0$ as $x_i \to x\in \partial C$. First, to show for $x\in \partial C$ that $\co(f)(x)=0$, it suffices to find a concave function which is $0$ at $x$ and at least as large as $f$ everywhere else. To do this, note that as $f=0$ on a neighborhood of $\partial C$, we may find a linear function $L$ which is $0$ on $x$ and positive on the set $f\ne 0$, which we can scale up to ensure that $L>||f||_{\infty}$ at points where $f \ne 0$. A similar argument works for points near the boundary, which verifies the whole claim.

In particular, because $\co(f)$ is continuous and concave, it is approximated in the supremum norm by concave piecewise-linear functions from above. Let $c$ be a concave piecewise-linear approximation to $\co(f)$ with $c\ge \co(f)$, and $||c-\co(f)||_{\infty}\le \epsilon$  for some fixed $\epsilon$. Let $x_1,\ldots,x_N\in C$ be a finite collection of points for which the graph of $c$ is the upper convex hull of the points $(x_i,c(x_i))$ (note that here we use the fact that the domain is a polytope).

We note that $$\co(f)(x)=\sup \{\lambda_1f(x_1)+\ldots+\lambda_\ell f(x_{\ell}):\ell \in \mathbb{N}, \lambda_1,\ldots,\lambda_\ell \in [0,1], \sum \lambda_i=1, \sum \lambda_i x_i=x\}.$$

Hence, there exists $M$, points $\{x_{i,j}\}_{1 \le i \le N,1 \le j \le M}$ and parameters $\lambda_{i,j}\in [0,1]$ with $\sum_{j=1}^M \lambda_{i,j}=1$,  $\sum_{j=1}^M  \lambda_{i,j}x_{i,j}=x_i$, and $$\co(f)(x_i)\le \sum_{j=1}^M \lambda_{i,j}f(x_{i,j})+\epsilon.$$

%Hence there exist points $y_{i,1,\ell},\ldots,y_{i,k,\ell}\in C$ and parameters $t_{i,j,\ell}\in [0,1]$ such that $\sum_j t_{i,j,\ell}=1$, $\sum_j t_{i,j,\ell} y_{i,j,\ell} \to x_i$ and $\lim_{\ell \to \infty} \sum_j t_{i,j,\ell} f(y_{i,j,\ell})=\co(f)(x_i)$. Indeed, this follows as $\overline{\co(A_f)}=A_{\co(f)}$.

%consider the hypograph $$A_f=\{(x,y)\in C\times \mathbb{R} : y \le f(x)\}.$$ Then $\overline{\co(A_f)}=A_{\co(f)}$. Because $(x_i,\co(f)(x_i))\in A_{\co(f)}$, we therefore can find a sequence of points $(z_{i,j},w_{i,j})\in \co(A_f)$ converging to $(x_i,\co(f)(x_i))$ as $j\to \infty$. Now, the convex hull of a set is the union of simplices with vertices in that set, so we can find $(y_{i,j,1},u_{i,j,1}),\ldots,(y_{i,j,k},u_{i,j,k})\in A_f$ and parameters $t_{i,1,\ell},\ldots,t_{i,k,\ell}\in [0,1]$ such that $\sum_j t_{i,j,\ell}=1$ and $\sum_j t_{i,j,\ell}y_{i,j,\ell}=z_{i,j}$ and $\sum_j t_{i,j,\ell} u_{i,j,\ell}=w_{i,j}$. Then we still clearly have convergence when we replace $u_{i,j,\ell}$ with $f(y_{i,j,\ell})$ as we cannot replace $\co(f)(x_i)$ with anything larger in $(x_i,\co(f)(x_i))$ and keep the point in $A_{\co(f)}$.

%By passing to a convergent subsequence, we may also assume that the limits $\lim_\ell y_{i,j,\ell}= y_{i,j}\in C$, $\lim_\ell t_{i,j,\ell}= t_{i,j}$ and $\lim_\ell f(y_{i,j,\ell})$ exist. Furthermore, for $x_i\in \partial C$, as $\co(f)(x_i)=f(x_i)=0$ we may further assume that $y_{i,j,\ell}=x_i$ for all $j,\ell$.
Let $$f_{\epsilon}(x)=\begin{cases}f(x)+2\epsilon& \text{if }x = x_{i,j}\text{ for some $i,j$, and}\\ f(x)& \text{otherwise.}\end{cases}$$
We remark that $$\sum_{j=1}^M \lambda_{i,j}f_{\epsilon}(x_{i,j})=2\epsilon+ \sum_{j=1}^M \lambda_{i,j}f(x_{i,j}) \ge \co(f)(x_{i})+\epsilon \ge c(x_{i}).$$
Hence letting $g$ be the upper convex hull of the points $(x_{i,j},f_{\epsilon}(x_{i,j}))$, we have $g \ge c\ge f$. 

We claim that $\co(f_{\epsilon})(x)=g$. Indeed, we trivially have $g \le \co(f_{\epsilon})$, so it suffices to show $g \ge \co(f_{\epsilon})$.  For $x=x_{i,j}$ we clearly have $g(x) \ge f_{\epsilon}(x)$ and for $x\ne x_{i,j}$, we have $g(x)\ge f(x)=f_{\epsilon}(x)$. Hence $g \ge f_{\epsilon}$, so as $g$ is concave, $g \ge \co(f_{\epsilon})$.

Hence, $\co(f_{\epsilon})$ is the upper convex hull of finitely many points $(r_i,f_{\epsilon}(r_i))$. As $||f_{\epsilon}-f||_{\infty} \le 2\epsilon$ and $f_{\epsilon}\ge f$,  we have by our hypothesis,
\begin{align*}\int_C f^{\ast n}(x)-f(x)dx +2\epsilon|C| \ge \int_C (f_{\epsilon})^{\ast n}(x)-f_{\epsilon}(x) &\ge c_{k,n}\int_C \co(f_{\epsilon})(x)-f_{\epsilon}(x) dx \\&\ge c_{k,n}\int_C \co(f)(x)-f(x) dx - 2\epsilon c_{k,n}|C|,\end{align*}
where the first inequality follows from the fact that $f^{\ast n}+2\epsilon = (f+2\epsilon)^{\ast n} \ge (f_{\epsilon})^{\ast n}$. Letting $\epsilon \to 0$ we conclude.

\end{proof}

\begin{clm}
Suppose that \Cref{first_main} (resp. \Cref{constantconj}, \Cref{AsympTheorem}, \Cref{second_main}) is true when the domain $C$ is a simplex $T$, $f=0$ at the vertices of $T$ and $f\le 0$. Then it is true when the domain $C$ is a polytope $P$ and $\co(f)$ is the upper convex hull of finitely many points $(r_i,f(r_i))$.
\end{clm}
\begin{proof}
We prove this claim for \Cref{constantconj}, the other cases are similar.
Let $f$ be defined on a polytopal domain $C=P$ with $\co(f)$ the upper convex hull of finitely many points $(r_i,f(r_i))$. The domains of linearity of $\co(f)$ decompose $C$ into convex polytopes with vertices a subset of the $r_i$. Further subdivide this decomposition into triangulation $\mathcal{T}$. Then $\co(f|_TT)=\co(f)|_T$ for all $T\in \mathcal{T}$, so
$\int_C \co(f)(x)-f(x) dx = \sum_{T\in \mathcal{T}}\int_T\co(f|_T)(x)-f|_T(x) dx$ and
$$\int_C f^{\ast n}(x)-f(x) dx \ge \sum_T \int_T(f|_T)^{\ast n}(x)-f|_T(x) dx.$$
Hence it suffices to prove for every $T\in \mathcal{T}$ that
$$\int_T (f|_T)^{\ast n}(x)-f|_T(x) dx \ge c_{k,n}\int \co(f|_T)(x)-f|_T(x) dx.$$
As $\co(f|_T)$ is linear, and the inequality is preserved by subtracting linear functions from $f$, we may subtract $\co(f|_T)$ from $f$, after which $f=0$ at the vertices of $T$ and $f \le 0$ on $T$. Thus by hypothesis we are done.
\end{proof}

The above sequence of reductions gives the desired conclusion.

\end{proof}

\section{Hypersimplex Covering}
\label{HypersimplexSection}
We take $T$ to be the convex hull of the standard basis vectors $e_1,\ldots,e_{k+1}\in \mathbb{R}^{k+1}$. Recall that the $m$'th $k$-dimensional  hypersimplex for $1 \le m \le k$ is defined to be the region in $\mathbb{R}^{k+1}$ given by
$$P_{k,m}:=\left\{(x_1,\ldots,x_{k+1})\in [0,1]^{k+1}: \sum x_i=m\right\}.$$

\begin{defn}\label{def_partition}
Let $$\mathcal{B}_{k,\ell}=\left\{(x_1,\ldots,x_{k+1})\in \mathbb{Z}_{\ge 0}^{k+1} : \sum x_i=\ell\right\}.$$
\end{defn}
%It is classical that $|P_{k,m}|=A(k,m-1)$ where $A(k,\ell)$ is the Eulerian number counting the number of permutations of $S_k$ with exactly $\ell$ descents.
\begin{prop}
\label{subdivprop}
For $k,n\ge 1$ we have a polytopal subdivision
$$T=\bigcup_{m=1}^{\min(k,n)}\bigcup_{v \in \mathcal{B}_{k,n-m}}\frac{1}{n}P_{k,m}+\frac{1}{n}v.$$
\end{prop}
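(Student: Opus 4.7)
The plan is to lift the problem to the affine hyperplane $H := \{x\in \mathbb{R}^{k+1}:\sum x_i = n\}$ in $\mathbb{R}^{k+1}$ and exploit the standard unit-cube tiling. After scaling by $n$, the claim is equivalent to the assertion that
$$nT \;=\; \{x\in \mathbb{R}_{\ge 0}^{k+1}:\sum x_i = n\} \;\subset\; H$$
is polytopally subdivided by the translates $P_{k,m}+w$ indexed by $w\in\mathcal{B}_{k,n-m}$ and $m\in\{1,\ldots,\min(k,n)\}$.

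First I would record the elementary identity
$$\bigl([0,1]^{k+1}+w\bigr)\cap H \;=\; P_{k,m}+w, \qquad m := n-\sum w_i,$$
valid for every $w\in \mathbb{Z}^{k+1}$, and note that this intersection is full-dimensional in $H$ exactly when $1\le m\le k$ (when $m\in\{0,k+1\}$ it collapses to a single vertex of the cube). This is the geometric source of the hypersimplex pieces.

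Next I would pin down which translates actually contribute to $nT$. I claim $P_{k,m}+w\subseteq nT$ iff $w\ge 0$ componentwise: if $w\ge 0$ then any $y+w$ with $y\in P_{k,m}$ has nonnegative coordinates summing to $n$, while conversely if $w_i<0$ for some $i$ the point in $P_{k,m}+w$ obtained by setting $y_i = 0$ has negative $i$-th coordinate and lies outside $nT$. Combined with $\sum w_i = n-m \ge 0$ and the full-dimensionality constraint $1\le m\le k$, this gives exactly the index set $w\in\mathcal{B}_{k,n-m}$, $m\in\{1,\ldots,\min(k,n)\}$.

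To finish, I would invoke the fact that the cubes $[0,1]^{k+1}+w$ for $w\in \mathbb{Z}^{k+1}$ tile $\mathbb{R}^{k+1}$ with pairwise disjoint interiors; intersecting this cubical tiling with $H$ produces a polytopal subdivision of $H$, and restricting to the admissible $w$ identified above yields the desired subdivision of $nT$. Rescaling by $1/n$ recovers the statement. I don't foresee any serious obstacle: the only place requiring care is the boundary bookkeeping, namely checking that $w_i<0$ is incompatible with an open cube meeting $nT$ (which forces $w\ge 0$) and observing that the upper bound $w_i\le n-1$ is automatic from $\sum w_j = n-m$ with $w\ge 0$ and $m\ge 1$, so no additional constraints beyond $w\in\mathcal{B}_{k,n-m}$ appear.
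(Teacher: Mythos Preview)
Your proposal is correct and follows essentially the same route as the paper: scale up to $nT\subset H$, slice the integer unit-cube tiling of $\mathbb{R}^{k+1}$ by the hyperplane $H=\{\sum x_i=n\}$ to obtain hypersimplex pieces $P_{k,m}+w$, and then identify the pieces lying in $nT$ as exactly those with $w\ge 0$, i.e.\ $w\in\mathcal{B}_{k,n-m}$. The paper phrases the bookkeeping slightly differently (it intersects the cubes directly with $nT$ and argues that $v_i\le -1$ forces the intersection into the facet $\{x_i=0\}$, hence is lower-dimensional), but this is the same observation as your remark that $w_i<0$ is incompatible with the open cube meeting $nT$.
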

\begin{proof}
Note that because $\bigcup_{v \in \mathbb{Z}^{k+1}} [0,1]^{k+1}+v$ subdivide $\mathbb{R}^{k+1}$, the intersections $\bigcup_{v \in \mathbb{Z}^{k+1}}([0,1]^{k+1}+v)\cap nT$ form a polytopal subvidision of $nT$. Let $\mathcal{B}$ be the set of such $v$ for which $([0,1]^{k+1}+v)\cap nT$ is $k$-dimensional, such that $\bigcup_{v \in \mathcal{B}}([0,1]^{k+1}+v)\cap nT$ also forms a polytopal subvidision of $nT$.

Let $v\in \mathcal{B}$, and set $m=n-\sum v_i$. We first claim that $1 \le m \le k$ and
$$([0,1]^{k+1}+v)\cap \left\{\sum x_i=n\right\}=P_{k,m}+v.$$ Indeed, as $nT$ lies in the $\sum x_i=n$ hyperplane and $([0,1]^{k+1}+v)\cap nT$ is $k$-dimensional, we must have $n-k \le \sum v_i \le n-1$, i.e. $1 \le m \le k$. Then it is easy to see that $([0,1]^{k+1}+v)\cap \left\{\sum x_i=n\right\}=P_{k,m}+v$ by definition.

Next, we claim that $v\in \mathcal{B}_{k,n-m}$, i.e. that $v$ has no negative coordinates. Indeed, suppose that $v_1\leq -1$. Then $([0,1]^{k+1}+v) \cap nT \subset \{x_1=0\}\cap nT$ which is at most $k-1$-dimensional.

Finally, we claim that 
$$([0,1]^{k+1}+v)\cap \left\{\sum x_i=n\right\} \subset nT.$$ Indeed, as $v\in \mathcal{B}_{k,n-m}$, all coordinates are non-negative.

Conversely, if $v\in \mathcal{B}_{k,n-m}$, then $\frac{1}{n}P_{k,m}+\frac{1}{n}v=([0,1]^{k+1}+v)\cap \{\sum x_i=n\}=([0,1]^{k+1}+v)\cap nT$, where the last equality is because all coordinates are non-negative.

Therefore we have the polytopal subdivision $nT=\bigcup_{m=1}^{k}\bigcup_{v\in \mathcal{B}_{k,n-m}} P_{k,m}+v$, and as $\mathcal{B}_{k,n-m}=\emptyset$ for $m>n$, we conclude.
\end{proof}
\begin{prop}
For $1 \le m \le \min(k,n)$ and $v\in \mathcal{B}_{k,n-m}$, the points in $\frac{1}{n}P_{k,m}^{\circ}+\frac{1}{n}v$ (the interior of $\frac{1}{n}P_{k,m}+\frac{1}{n}v$) can be written as $\frac{x_1+\ldots+x_n}{n}$ with $n-m$ of the $x_i$ being vertices of $T$, but not with at least $n-m+1$ of the $x_i$ being vertices of $T$.
\end{prop}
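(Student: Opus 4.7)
I would parametrize and then carry out a counting argument on how many coordinates of $v$ can be ``fed'' by vertex contributions.

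Write $z \in \frac{1}{n}P_{k,m}^\circ + \frac{1}{n}v$ as $z = \frac{1}{n}(p+v)$ with $p = (p_1,\ldots,p_{k+1}) \in P_{k,m}^\circ$, so $p_j \in (0,1)$ for every $j$ and $\sum_j p_j = m$. Now suppose $z = \frac{1}{n}(x_1+\ldots+x_n)$ with each $x_i \in T$, and let $a_j$ denote the number of indices $i$ for which $x_i = e_j$. Then $N := \sum_j a_j$ is the total number of $x_i$'s that are vertices of $T$, and the remaining $n-N$ points are non-vertex points of $T$ whose $j$-th coordinates sum (over both $i$ and remaining indices) to
\[
(p_j + v_j) - a_j.
\]
Since each $x_i$ has non-negative coordinates, this quantity must be $\ge 0$, so $a_j \le p_j + v_j$.

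For the upper bound on $N$: because $0 < p_j < 1$, we have $p_j + v_j < v_j + 1$, and since $a_j$ is a non-negative integer this forces $a_j \le v_j$. Summing over $j$ gives $N = \sum_j a_j \le \sum_j v_j = n-m$. So at most $n-m$ of the $x_i$ can be vertices of $T$.

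For achievability with $N = n-m$: take $a_j = v_j$ copies of $e_j$ for each $j$ (giving $n-m$ vertex points in total), and for the remaining $m$ indices set $x_i := p/m$. Each such $x_i$ is in $T$ since $p/m$ has coordinate sum $\frac{1}{m}\sum p_j = 1$ and entries in $(0, 1/m) \subset [0,1]$; moreover, the total sum is $(v_1 e_1 + \ldots + v_{k+1}e_{k+1}) + m \cdot (p/m) = v + p = nz$, as required. Hence $n-m$ vertices are attained.

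The one place to be careful is the strict inequality $p_j < 1$, which is where the hypothesis $p \in P_{k,m}^\circ$ (rather than the closed hypersimplex) is used to rule out the boundary case $a_j = v_j + 1$; everything else is essentially a bookkeeping exercise, so I would not expect any real obstacle beyond this observation.
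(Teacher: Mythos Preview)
Your proof is correct and follows essentially the same approach as the paper's. For achievability both arguments take $v_j$ copies of $e_j$ together with $m$ copies of $p/m$; for the impossibility of $n-m+1$ vertices the paper phrases the same coordinate-wise bound via $\lfloor ny_j\rfloor = v_j$ and $\lfloor\sum_i (x_i)_j\rfloor \ge \sum_i \lfloor (x_i)_j\rfloor$, whereas you obtain $a_j \le v_j$ directly from $a_j \le p_j + v_j < v_j + 1$ --- this is the same observation without the floor-function packaging.
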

\begin{proof}
For $y=\frac{1}{n}w+\frac{1}{n}v\in \frac{1}{n}P^{\circ}_{k,m}+\frac{1}{n}v$, we can write
$y=\frac{m \cdot (\frac{1}{m}w)+\sum v_i e_i}{n}$, and $\frac{1}{m}w\in \frac{1}{m}P_{k,m}^{\circ}\subset T$.

Conversely, suppose that we can write $y=\frac{x_1+\ldots+x_n}{n}$ with $x_1,\ldots,x_{n-m+1}$ vertices of $T$ and $x_{n-m+2},\ldots,x_n\in T$. Then $\lfloor ny \rfloor = v$, so we obtain the contradiction $$n-m=\sum_{i=1}^{k+1} \lfloor ny_i \rfloor\ge \sum_{i=1}^{k+1}\sum_{j=1}^n \lfloor (x_{j})_i\rfloor \ge \sum_{i=1}^{k+1}\sum_{j=1}^{n-m+1} \lfloor (x_{j})_i\rfloor = n-m+1.$$
\end{proof}

\section{\texorpdfstring{$m$}{m}-averageable sets}We now define a new notion of ``$m$-averageable'' subset of a simplex $T$.
\begin{defn}
Given a simplex $T$, say that a subset $S\subset T$ is ``$m$-averageable'' if there are mappings $H_1,\ldots,H_m:T \to T$ which are generically bijective of Jacobian $1$ such that $\frac{H_1+\ldots+H_m}{m}$ is a generically bijective map $T\to S$ with constant Jacobian $\frac{|S|}{|T|}$.
\end{defn}
The key property of an $m$-averageable set $S\subset T$ is the observation that
$$\int_S f_1\ast \ldots \ast f_m(x) dx \ge \frac{|S|}{|T|}\frac{\sum_{i=1}^m \int_T f_i(x) dx}{m}.$$ This observation will be used directly, and in a slightly modified form, in the propositions below.

\begin{exmp}
In three dimensions, the subsimplex $S\subset T$ defined by a vertex of $T$ and the opposite medial triangle is $2$-averageable.

Indeed, we can take $H_1$ to be the identity map and $H_2$ to be the linear map fixing the common vertex $v$ of $S$ and $T$ and cycling the remaining vertices $v_1\mapsto v_2\mapsto v_3 \mapsto v_1$. Then $\frac{H_1+H_2}{2}$ is a linear map sending the vertices of $T$ to the vertices of $S$, and is thus a constant Jacobian $\frac{1}{4}$ map $T\to S$.
\begin{center}
    \begin{tikzpicture}[scale=0.5]
    \draw (0,0) node[anchor=east] {$v_1$}--(4,4) node[anchor=south] {$v_2$}--(8,0) node[anchor=west] {$v_3$}--(0,0);
    \filldraw[opacity=0.2] (2,2)--(6,2)--(4,0);
    \filldraw[opacity=0.2] (2,2)--(6,2)--(7,5);
    \filldraw[opacity=0.2] (6,2)--(4,0)--(7,5);
    \filldraw[opacity=0.2] (4,0)--(2,2)--(7,5);
    \draw (7,5)node[anchor=west] {$v$}--(0,0);
    \draw (7,5)--(4,4);
    \draw (7,5)--(8,0);
    \draw (7,5)--(2,2);
    \draw (7,5)--(6,2);
    \draw (7,5)--(4,0);
    \draw[dotted] (2,2)--(6,2);
    \draw[dotted] (4,4)--(8,0);
    \draw (2,2)--(4,0)--(6,2);
    \end{tikzpicture}
\end{center}
\end{exmp}

Recall that for $m\in \{1,\ldots,k\}$, we denote by  $P_{k,m}=[0,1]^{k+1}\cap \{\sum x_i=m\}$ for the $m$'th $k$-dimensional hypersimplex . The following two propositions reduce the theorems from the introduction to showing that certain hypersimplices embedded in $T$ are $m$-averageable.

\begin{prop}\label{fgprop}
Let $k\ge 1$ and $T$ the convex hull of the standard basis vectors in $\mathbb{R}^{k+1}$, and suppose that $\frac{1}{2}P_{k,2}$ is 2-averageable. Then for 
bounded functions $f,g:T\to \mathbb{R}$ with $f\leq 0$ and $f(x_i)=0$ for the vertices $x_i$ of $T$, we have
$$\int_T f\ast g(x)- \frac{f(x)+g(x)}{2} dx \ge \frac{k+1}{2^{k+1}}\int_T\co(f)(x)-f(x) dx.$$
\end{prop}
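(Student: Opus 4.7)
The plan is as follows. Since $f\le 0$ and $f=0$ on the vertices, any concave function $h\ge f$ satisfies $h(e_i)\ge 0$ at each vertex $e_i$ of $T$; as the minimum of a concave function on a simplex is attained at a vertex, $h\ge 0$ on $T$. Conversely the constant function $0$ is itself a concave majorant of $f$. Hence $\co(f)\equiv 0$ on $T$, and the target inequality is equivalent to
\[\int_T f\ast g(x)\,dx\;\ge\;\frac{|D|}{2|T|}\int_T f(x)\,dx\;+\;\frac{1}{2}\int_T g(x)\,dx,\]
where $D:=\frac{1}{2}P_{k,2}$, once we use the identity $\frac{1}{2}-\frac{k+1}{2^{k+1}}=\frac{|D|}{2|T|}$, which follows from volume accounting in the $n=2$ case of \Cref{subdivprop} (the $k+1$ corner simplices each have relative volume $1/2^k$, so $|D|/|T|=1-(k+1)/2^k$).

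I would then apply \Cref{subdivprop} with $n=2$ to decompose $T$ into the $k+1$ corner simplices $C_i:=\frac{1}{2}P_{k,1}+\frac{1}{2}e_i=\frac{1}{2}(T+e_i)$ and the central hypersimplex $D=\frac{1}{2}P_{k,2}$. For each corner and each $z\in C_i$, write $z=\frac{e_i+(2z-e_i)}{2}$ with $2z-e_i\in T$; the defining sup for $f\ast g$ yields the pointwise bound $(f\ast g)(z)\ge \frac{f(e_i)+g(2z-e_i)}{2}=\frac{g(2z-e_i)}{2}$ since $f(e_i)=0$. Integrating and changing variables $y=2z-e_i$ (Jacobian $2^k$) gives $\int_{C_i}f\ast g\ge \frac{1}{2^{k+1}}\int_T g$, so summing over $i$ yields $\sum_{i=1}^{k+1} \int_{C_i} f\ast g\ge \frac{k+1}{2^{k+1}}\int_T g$.

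For the central region I would invoke the $2$-averageability of $D$: let $H_1,H_2:T\to T$ be generically bijective of Jacobian $1$ with $\Phi:=\frac{H_1+H_2}{2}:T\to D$ generically bijective of constant Jacobian $|D|/|T|$. For $u\in T$ and $z=\Phi(u)$,
\[(f\ast g)(z)\;\ge\;\frac{f(H_1(u))+g(H_2(u))}{2},\]
and the change of variables induced by $\Phi$, combined with the volume preservation $\int_T f(H_j(u))\,du=\int_T f$, gives $\int_D f\ast g\ge \frac{|D|}{|T|}\cdot\frac{1}{2}\bigl(\int_T f+\int_T g\bigr)$. Adding the corner and central contributions and using the identity $\frac{k+1}{2^{k+1}}+\frac{|D|}{2|T|}=\frac{1}{2}$ produces the reformulated inequality and finishes the proof.

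The substantive input is the $2$-averageability hypothesis on $\frac{1}{2}P_{k,2}$; everything else is bookkeeping. The main thing to notice is the arithmetic miracle that the corner contribution $\frac{k+1}{2^{k+1}}\int_T g$ and the central contribution $\frac{|D|}{2|T|}\int_T g$ sum to exactly $\frac{1}{2}\int_T g$, while the $f$-coefficient arises solely from the central region and equals precisely the required deficit $\frac{1}{2}-\frac{k+1}{2^{k+1}}$, so no further optimization of the corner bounds is needed.
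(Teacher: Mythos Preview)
Your proof is correct and follows essentially the same approach as the paper: decompose $T$ via \Cref{subdivprop} (with $n=2$) into the $k+1$ corner simplices $\frac{e_i+T}{2}$ and the central piece $\frac{1}{2}P_{k,2}$, bound $f\ast g$ on each corner using $f(e_i)=0$, bound it on the center via the $2$-averageability hypothesis, and add. Your explicit observation that $\co(f)\equiv 0$ and the accompanying arithmetic check of how the coefficients on $\int_T f$ and $\int_T g$ combine are a helpful elaboration of what the paper leaves implicit.
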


\begin{prop}\label{repeatedprop}
Let $T$ be the convex hull of the standard basis vectors in $\mathbb{R}^{k+1}$, and suppose that $\frac{1}{m}P_{k,m}$ is m-averageable for $m \le \min(k,n)$. Then for a
bounded function $f:T\to \mathbb{R}_{\leq 0}$ with  $f(x_i)=0$ for the vertices $x_i$ of $T$, we have
$$\int_T f^{\ast n}(x)-f(x) dx \ge c_{k,n}\int_T \co(f)(x)-f(x) dx$$
where $c_{k,n}$ is as in \Cref{constantconj}.
\end{prop}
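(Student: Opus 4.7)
The plan is to use the polytopal subdivision $T = \bigcup_{m,v}\bigl(\tfrac{1}{n}P_{k,m} + \tfrac{1}{n}v\bigr)$ from \Cref{subdivprop} together with the $m$-averageability hypothesis to bound $\int_T f^{\ast n}$ region by region. Since $f \le 0$ vanishes at the vertices of $T$, the zero function is a concave majorant of $f$, while concavity applied to any convex combination of vertices gives $\co(f) \ge 0$; hence $\co(f) \equiv 0$ and the target inequality is equivalent to
$$\int_T f^{\ast n}(x)\,dx \;\ge\; (1-c_{k,n})\int_T f(x)\,dx.$$

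Fix $m \in \{1,\ldots,\min(k,n)\}$ and $v \in \mathcal{B}_{k,n-m}$, and set $R_v := \tfrac{1}{n}P_{k,m} + \tfrac{1}{n}v$. By \Cref{subdivprop}, any $y \in R_v$ can be written as $y = \frac{1}{n}\bigl(m w + \sum v_i e_i\bigr)$ with $w \in \tfrac{1}{m}P_{k,m}$. Using the $m$-averageability of $\tfrac{1}{m}P_{k,m}\subset T$, write $w = \frac{H_1(x)+\cdots+H_m(x)}{m}$ for some $x \in T$, where $H_1,\ldots,H_m: T\to T$ are generically bijective with unit Jacobian. Substituting expresses $y$ as an $n$-average of points in $T$, using $n-m$ vertices, and since $f$ vanishes at the vertices,
$$f^{\ast n}(y) \;\ge\; \tfrac{1}{n}\bigl(f(H_1(x))+\cdots+f(H_m(x))\bigr).$$
Integrating over $R_v$ and performing the change of variables $y \leftrightarrow x$ --- the composition of $x \mapsto \frac{1}{m}\sum H_i(x)$ (Jacobian $|\tfrac{1}{m}P_{k,m}|/|T|$) with $w \mapsto \tfrac{m}{n}w + \tfrac{1}{n}v$ (Jacobian $(m/n)^k$), for overall Jacobian $|R_v|/|T|$ --- and applying $\int_T f\circ H_i = \int_T f$, one obtains
$$\int_{R_v} f^{\ast n}(y)\,dy \;\ge\; \frac{m|R_v|}{n|T|}\int_T f(x)\,dx$$
(both sides non-positive, direction preserved).

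Summing over $v \in \mathcal{B}_{k,n-m}$, which has size $\binom{n+k-m}{k}$, and then over $m$, and using the standard identity $|P_{k,m}|/|T| = A(k,m-1)$, yields
$$\int_T f^{\ast n}(y)\,dy \;\ge\; \Bigl(\sum_{m=1}^{\min(k,n)} \frac{m}{n^{k+1}}\binom{n+k-m}{k}A(k,m-1)\Bigr)\int_T f(x)\,dx.$$
The last step is to identify the coefficient with $1 - c_{k,n}$: Worpitzky's identity $n^k = \sum_m \binom{n+k-m}{k}A(k,m-1)$ rewrites $1$ as $\tfrac{1}{n^k}\sum_m\binom{n+k-m}{k}A(k,m-1)$, and subtracting the defining formula for $c_{k,n}$ collapses the coefficients $\tfrac{1}{n^k} - \tfrac{n-m}{n^{k+1}}$ to $\tfrac{m}{n^{k+1}}$, matching exactly.

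I expect the main obstacle to be organizational rather than analytic: properly unpacking the $m$-averageability into a measure-preserving parametrization of $R_v$ by $T$ and carefully tracking the three Jacobians in the composition, while also keeping the sign of $\int_T f \le 0$ straight. Once the bookkeeping is set up, the inequality reduces cleanly to the Worpitzky identity applied to the closed form for $c_{k,n}$.
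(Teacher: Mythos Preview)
Your proof is correct and follows essentially the same route as the paper: both use the hypersimplex subdivision of \Cref{subdivprop}, bound $\int f^{\ast n}$ on each piece via the $m$-averageability maps $H_i$ and the vanishing of $f$ at the vertices, and then sum using Worpitzky's identity. Your write-up is somewhat more explicit than the paper's---you spell out why $\co(f)\equiv 0$, track the Jacobian factors in the change of variables, and exhibit the algebra identifying the coefficient with $1-c_{k,n}$---but the argument is the same.
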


\begin{proof}[Proof of \Cref{fgprop}]
Consider the $n=2$ polytope decomposition from \Cref{subdivprop}
$$T=\frac{1}{2}P_{k,2}\cup\bigcup_{i=1}^{k+1}\frac{e_i+T}{2}.$$
Because by hypothesis $\frac{1}{2}P_{k,2}$ is $2$-averageable, there are functions $H_1,H_2$ such that $H_1,H_2:T\to T$ are generically bijective with Jacobian $1$, and $\frac{H_1+H_2}{2}:T\to S$ is generically bijective with Jacobian $\frac{|\frac{1}{2}P_{k,2}|}{|T|}=1-\frac{k+1}{2^k}$. Then the result follows by adding the inequality
\begin{align*}\int_{\frac{1}{2}P_{k,2}} (f\ast g)(x) dx =&\left(1-\frac{k+1}{2^{k}}\right)\int_T f\ast g\left(\frac{H_1(x)+H_2(x)}{2}\right)dx\\\ge& \left(1-\frac{k+1}{2^{k}}\right)\frac{\int_T f(H_1(x)) dx+\int_T g(H_2(x)) dx}{2}\\
=&\left(1-\frac{k+1}{2^{k}}\right)\frac{\int_T f(x) dx+\int_T g(x) dx}{2}\end{align*}
to the inequality
\begin{align*}
    \int_{\frac{e_i+T}{2}}(f \ast g)(x) dx = \frac{1}{2^k} \int_{T} f\ast g \left(\frac{e_i+x}{2}\right) dx &\ge \frac{1}{2^{k+1}}\int_{T} f(e_i)+g(x) dx=\frac{1}{2^{k+1}}\int_{T} g(x) dx
\end{align*}
for $i=1,\ldots,k+1$.
\end{proof}

\begin{proof}[Proof of \Cref{repeatedprop}]
By \Cref{subdivprop}, there is a polytope subdivision
$$T=\bigcup_{m=1}^{\min(k,n)}\bigcup_{x\in \mathcal{B}_{k,n-m}}\frac{1}{n}P_{k,m}+\frac{1}{n}x$$
where $$\mathcal{B}_{k,\ell}=\left\{x=(x_1,\ldots,x_{k+1})\in \mathbb{Z}_{\ge 0}^{k+1}: \sum x_i=\ell\right\}.$$
Let $H^1_{k,m},\ldots,H^m_{k,m}$ be the functions associated to the $k$-averageable set $\frac{1}{m}P_{k,m}$. Then
\begin{align*}\int_{\frac{1}{n}P_{k,m}+\frac{1}{n}x}f^{\ast n}(x)dx&=\frac{1}{n^k}\cdot\frac{|P_{k,m}|}{|T|}\int_{T}f^{\ast n}\left(\frac{H_{k,m}^1(x)+\ldots+H_{k,m}^m(x)+x_1 e_1+\ldots+ x_{k+1}e_{k+1}}{n}\right)dx\\
&\ge \frac{1}{n^k}\cdot\frac{|P_{k,m}|}{|T|}\int_T\frac{f(H^1_{k,m}(x))+\ldots+f(H^m_{k,m}(x))+x_1f(e_1)+\ldots+x_{k+1}f(e_{k+1})}{n} dx\\
&= A(k,m-1)\cdot \frac{m}{n^{k+1}}\int_T f(x) dx.\end{align*}
Recalling that $\co(f)= 0$, summing these inequalities and using the Worpitzky identity that $\sum_m |\mathcal{B}_{k,n-m}|A(k,m-1)=\sum_m \binom{n+k-m}{k}A(k,m-1)=n^k$ yields the desired result.
\end{proof}

\section{Proofs of \texorpdfstring{\Cref{first_main}}{Theorem 1.1} and \texorpdfstring{\Cref{second_main}}{Theorem 1.5}}
By the propositions in the previous section, it will suffice to show that
\begin{align*}P_{1,1}\subset \mathbb{R}^2\\
P_{2,1},\frac{1}{2}P_{2,2}\subset \mathbb{R}^3\\P_{3,1},\frac{1}{2}P_{3,2},\frac{1}{3}P_{3,3}\subset \mathbb{R}^4
\end{align*}
are all $m$-averageable in the corresponding convex hull of standard basis vectors $T$ for $m=1$, $m=1,2$, and $m=1,2,3$ respectively.

The following lemma handles all cases except for $\frac{1}{2}P_{3,2}$.
\begin{lem}
$P_{k,1}$ is $1$-averageable and $\frac{1}{k}P_{k,k}$ is $k$-averageable for all $k\ge 1$.
\end{lem}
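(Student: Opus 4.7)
The plan is to exhibit explicit linear maps in each case. For $P_{k,1}$, the key observation is that $P_{k,1} = [0,1]^{k+1} \cap \{\sum x_i = 1\}$ is literally equal to $T$ itself (the constraints $x_i \in [0,1]$ are automatic given $x_i \ge 0$ and $\sum x_i = 1$). So one can just take $H_1 = \mathrm{id}_T$, which trivially satisfies all conditions with $|S|/|T| = 1$.

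For $\frac{1}{k}P_{k,k}$, the idea is to use the $k$ non-identity cyclic permutations of coordinates on $\mathbb{R}^{k+1}$. Let $\sigma$ be the cyclic shift $\sigma(x_1, \ldots, x_{k+1}) = (x_2, \ldots, x_{k+1}, x_1)$ and set $H_i = \sigma^i$ for $i = 1, \ldots, k$. Each $H_i$ permutes the vertices $e_1, \ldots, e_{k+1}$ of $T$, so it restricts to a linear bijection $T \to T$ whose matrix is a permutation matrix, hence of Jacobian $1$. The crucial computation is that for $x \in T$,
$$x + H_1(x) + H_2(x) + \cdots + H_k(x) \;=\; \sum_{i=0}^{k} \sigma^i(x) \;=\; \left(\sum_j x_j\right) (1, 1, \ldots, 1) \;=\; (1, 1, \ldots, 1),$$
so $\frac{H_1(x) + \cdots + H_k(x)}{k} = \frac{(1, \ldots, 1) - x}{k}$.

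It remains to check that this affine map is a bijection $T \to \frac{1}{k}P_{k,k}$ with the right constant Jacobian. The involution $x \mapsto (1, \ldots, 1) - x$ sends the constraints $x_j \in [0,1]$, $\sum x_j = 1$ to $1 - x_j \in [0,1]$, $\sum (1 - x_j) = k$, so it is a bijection $P_{k,1} = T \to P_{k,k}$; dividing by $k$ yields a bijection onto $\frac{1}{k}P_{k,k}$. In particular $|P_{k,k}| = |T|$, so $|\frac{1}{k}P_{k,k}|/|T| = (1/k)^k$. Since the linear part of $x \mapsto ((1,\ldots,1)-x)/k$ on the tangent hyperplane $\sum x_i = 1$ is $-\frac{1}{k}\mathrm{Id}$, acting on a $k$-dimensional space, its Jacobian determinant has magnitude $(1/k)^k$, matching the required volume ratio exactly.

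I do not expect any serious obstacle here: the construction is explicit and the verification is essentially a one-line identity about cyclic shifts summing to a constant vector. The only care needed is to remember that Jacobians are measured on the $k$-dimensional affine hull of $T$ rather than the ambient $\mathbb{R}^{k+1}$, which is why the relevant factor is $(1/k)^k$ rather than anything depending on the embedding.
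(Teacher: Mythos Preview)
Your proof is correct and follows essentially the same approach as the paper: both take $H_1=\mathrm{id}$ for $P_{k,1}=T$, and for $\frac{1}{k}P_{k,k}$ both use the $k$ nontrivial cyclic coordinate permutations $H_i=\sigma^i$, whose average on $T$ is the affine map $x\mapsto \frac{(1,\ldots,1)-x}{k}$ sending $e_i$ to $\frac{1}{k}\sum_{j\ne i}e_j$. You give slightly more detail on the Jacobian verification, but the arguments are otherwise identical.
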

\begin{proof}
For $n=1$, $P_{k,1}=T$ so we may take $H$ to be the identity map and there is nothing to prove.

For $n=k$, let $\sigma$ be the linear map taking $e_1\mapsto e_2\mapsto \ldots \mapsto e_{k+1}\mapsto e_1$. Then $\sigma$ is an isometry, and so $H_i=\sigma^{i}$ is also an isometry. The average
$$\frac{H_1+\ldots+H_{k}}{k}$$ is the linear map taking $e_i \mapsto \frac{1}{k}\sum_{j \ne i}e_j$, which is a linear bijection from the simplex $T$ to the simplex $\frac{1}{k}P_{k,k}$.
\end{proof}

The following lemma therefore completes the proofs of \Cref{first_main} and \Cref{second_main}.
\begin{lem}
$\frac{1}{2}P_{3,2}$ is $2$-averageable.
\end{lem}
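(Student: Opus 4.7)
The plan is to construct explicit piecewise-affine measure-preserving maps $H_1, H_2 \colon T \to T$ whose average $\Phi := \frac{H_1+H_2}{2}$ is generically bijective $T \to K := \frac{1}{2}P_{3,2}$ with constant Jacobian $\frac{1}{2}$. The setup: $K$ is the octahedron with six vertices $m_{ij} := \frac{e_i+e_j}{2}$ and volume $\frac{|T|}{2}$ (as follows from \Cref{subdivprop} with $n=2$), and it decomposes along its long diagonal from $m_{12}$ to $m_{34}$ into four tetrahedra $O^{(1)}, O^{(2)}, O^{(3)}, O^{(4)}$ of volume $\frac{|T|}{8}$ each, indexed so that $O^{(i)}$ has its triangular base lying in the face $x_i = \frac{1}{2}$ of $K$. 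Dually, $T$ partitions into the four face cones $F_i := \text{conv}(g, \{e_j : j \ne i\})$ from the centroid $g = \frac{1}{4}(e_1+\cdots+e_4)$, each of volume $\frac{|T|}{4}$.

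First I would define the average map: on each $F_i$, let $\Phi$ be the affine map sending the apex $g$ to the common axis-endpoint $m_{34}$ (which, in each case, is a vertex of $O^{(i)}$) and sending each base vertex $e_j$ (for $j \ne i$) to the corresponding vertex of $O^{(i)}$. The Jacobian ratio $\frac{|O^{(i)}|}{|F_i|} = \frac{1}{2}$ is automatic, and since the four targets tile $K$, the piecewise-affine $\Phi$ is generically bijective with the correct Jacobian; it may be discontinuous across the $2$-dimensional faces where adjacent $F_i$'s meet, but this affects only a measure-zero set. Next I would choose lifts at each piece-vertex: the constraint $H_1(v) + H_2(v) = 2\Phi(v) = e_p + e_q$ defines a $1$-parameter family of admissible pairs $(H_1(v), H_2(v)) \in T \times T$ (those coordinates outside $\{p,q\}$ must vanish), interpolating between the extremes $(e_p, e_q)$ and $(e_q, e_p)$. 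I would extend $H_1, H_2$ affinely on each $F_i$ and verify (i) each $H_k|_{F_i}$ has Jacobian $1$, equivalently $|H_k(F_i)| = \frac{|T|}{4}$, and (ii) the tilings $\bigsqcup_i H_k(F_i) = T$ up to measure zero, which gives each $H_k$ globally as a measure-preserving bijection.

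The hard step is the consistent choice of lifts. The naive attempt of taking $(H_1(v), H_2(v))$ to be a pair of distinct $T$-vertices at every piece-vertex forces $H_k$ to map the four vertices of $F_i$ onto all four $T$-vertices, giving an image of volume $|T|$ and Jacobian $4$, not the required $1$; equivalently, a short multiplicity count shows the pair-indices at the four vertices of $F_i$ never distribute as the necessary $(2,2,2,2)$ partition. Therefore at least one lift (naturally the shared vertex $g$, where $\Phi(g) = m_{34}$ forces the same pair-sum $e_3+e_4$ across all four cones) must be non-extreme, e.g., $H_1(g) = H_2(g) = m_{34}$ or another convex combination along the $(e_3, e_4)$-edge. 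The challenge is to make these choices simultaneously compatible with the Jacobian-$1$ requirement on both $H_1$ and $H_2$ across all four $F_i$'s and produce the desired global tiling; I expect this to be streamlined by exploiting the $\mathbb{Z}/2 \times \mathbb{Z}/2$ symmetry preserving the chosen diagonal (swapping $e_1 \leftrightarrow e_2$ and/or $e_3 \leftrightarrow e_4$), reducing the construction to a single $F_i$ together with a linear-algebraic verification.
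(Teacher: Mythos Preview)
Your proposal is not a proof but a plan, and the plan stalls precisely at the step that carries all the content. You set up the centroid decomposition $T=\bigcup_i F_i$ and the diagonal decomposition $K=\bigcup_i O^{(i)}$, observe that an affine $\Phi|_{F_i}\colon F_i\to O^{(i)}$ has the right Jacobian, and then say that the ``hard step'' is to find lifts $H_1,H_2$ with Jacobian $1$ whose images tile $T$. You never produce those lifts; you only note that extreme lifts fail and express hope that a $\mathbb{Z}/2\times\mathbb{Z}/2$ symmetry will help. But a quick check shows the difficulty is real: with $H_1(g)=H_2(g)=m_{34}$ and the remaining $H_1(e_j)$ chosen among $T$-vertices, the image $H_1(F_1)$ is either degenerate or has volume $|T|/2$, never $|T|/4$; and if $H_1(g)$ is itself a $T$-vertex then $H_1(F_1)$ is degenerate or all of $T$. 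So your decomposition forces non-vertex lifts at \emph{several} piece-vertices simultaneously, and you give no indication of how to choose them to make the four images of $H_1$ (and of $H_2$) tile $T$.

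The paper's construction avoids this entirely by a different decomposition of $T$. Instead of coning from the centroid, it writes $T$ as the $4$-cycle $e_1\to e_2\to e_3\to e_4\to e_1$ coned off at the two midpoints $\frac{e_1+e_3}{2}$ and $\frac{e_2+e_4}{2}$, giving four simplices $R_{i(i+1)}$ sharing a common edge. It then takes $H_1=\mathrm{id}$ and $H_2$ the piecewise-linear isometry that cyclically rotates these four pieces $R_{i(i+1)}\mapsto R_{(i+1)(i+2)}$. Both $H_1,H_2$ are then trivially Jacobian-$1$ bijections $T\to T$, and $\frac{H_1+H_2}{2}$ sends $R_{i(i+1)}$ to a simplex $S_{i(i+1)(i+2)}$; the four $S$'s are exactly the decomposition of the octahedron obtained by coning the square $m_{12}\to m_{23}\to m_{34}\to m_{41}$ off at $m_{13}$ and $m_{24}$. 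The choice $H_1=\mathrm{id}$ and the cyclic symmetry are what make this clean; your centroid-based approach does not have an analogous shortcut, and as written it is a gap rather than an alternative proof.
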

\begin{proof}
Decompose
$T=R_{12}\cup R_{23}\cup R_{34}\cup R_{41}$ where $R_{i(i+1)}$ is the simplex $$R_{i(i+1)}=\co\left(e_i,e_{i+1},\frac{e_1+e_3}{2},\frac{e_2+e_4}{2}\right).$$ Indeed, viewing $T$ as the $1$-dimensional cycle connecting $e_1 \to e_2 \to e_3 \to e_4 \to e_1$ coned off at the points $\frac{e_1+e_3}{2}$ and $\frac{e_2+e_4}{2}$, $R_{i(i+1)}$ corresponds to the line segment connecting $e_i\to e_{i+1}$ coned off at the points $\frac{e_1+e_3}{2}$ and $\frac{e_2+e_4}{2}$.

\begin{center}
\begin{tikzpicture}[scale=1.1]
\tikzset{->-/.style={decoration={
  markings,
  mark=at position #1 with {\arrow{>}}},postaction={decorate}}}
\draw (0,0) node[anchor=east]{$e_1$};
\draw (2,0) node[anchor=north]{$e_3$};
\draw (4,1)node[anchor=west]{$e_4$};
\draw (0,0)--(2,0);
\draw[thick, ->-=0.8]
(2,0)--(4,1);
\draw[thick, ->-=0.8, dashdotted] (4,1)--(0,0);
\draw[thick, ->-=0.8](0,0)-- (3,2)node[anchor=south]{$e_2$};
\draw [thick, ->-=0.9] (3,2)--(2,0);
\draw (3,2)--(4,1);

\filldraw[opacity=0.2] (1.5,1)--(2.5,1)--(3.5,1.5)--cycle;
\filldraw[opacity=0.2] (2.5,1)--(3,0.5)--(3.5,1.5)--cycle;
\filldraw[opacity=0.2] (1.5,1)--(2.5,1)--(3,0.5)--(2,0.5)--cycle;
\filldraw[opacity=0.2] (1.5,1)--(2.5,1)--(1,0)--cycle;
\filldraw[opacity=0.2] (2.5,1)--(3,0.5)--(1,0)--cycle;
\draw[draw=none,->-=0.5] (1.5,1)--(2.5,1);
\draw[draw=none,->-=0.5] (2.5,1)--(3,0.5);
\draw[draw=none,->-=0.5] (3,0.5)--(2,0.5);
\draw[draw=none,->-=0.5] (2,0.5)--(1.5,1);
%\draw (1.5,1)--(2.5,1)--(3,0.5)--(2,0.5)--cycle;
%\draw (1,0)--(1.5,1)--(3.5,1.5);
%\draw (1,0)--(2.5,1)--(3.5,1.5);
%\draw (1,0)--(3,0.5)--(3.5,1.5);
%\draw[dotted] (1,0)--(2,0.5)--(3.5,1.5);
\end{tikzpicture}
\end{center}
Let $H_1$ be the identity map and $H_2:T\to T$ be the piecewise linear local isometry defined by taking $R_{i(i+1)}\mapsto R_{(i+1)(i+2)}$, sending the vertices $e_i,e_{i+1},\frac{e_1+e_3}{2},\frac{e_2+e_4}{2}$ to $e_{i+1},e_{i+2},\frac{e_1+e_3}{2},\frac{e_2+e_4}{2}$, respectively.

Then $\frac{H_1+H_2}{2}$ takes $R_{i(i+1)}$ to $$S_{i(i+1)(i+2)}=\co\left(\frac{e_i+e_{i+1}}{2},\frac{e_{i+1}+e_{i+2}}{2},\frac{e_1+e_3}{2},\frac{e_2+e_4}{2}\right),$$
and the simplices $S_{123},S_{234},S_{341},S_{412}$ subdivide $\frac{1}{2}P_{3,2}$. Indeed, the octahedron $\frac{1}{2}P_{3,2}$ can be described as the one-dimensional cycle around the boundary of the square $\frac{e_1+e_2}{2}\to \frac{e_2+e_3}{2}\to \frac{e_3+e_4}{2}\to \frac{e_4+e_1}{2}\to \frac{e_1+e_2}{2}$ coned off at the points $\frac{e_1+e_3}{2}$ and $\frac{e_2+e_4}{2}$, and $S_{i(i+1)(i+2)}$ is the segment connecting $\frac{e_i+e_{i+1}}{2}$ and $\frac{e_{i+1}+e_{i+2}}{2}$ coned off at the points $\frac{e_1+e_3}{2}$ and $\frac{e_2+e_4}{2}$.

Hence $\frac{H_1+H_2}{2}$ is a bijection, and by symmetry has almost everywhere constant Jacobian. This shows $\frac{1}{2}P_{3,2}$ is $2$-averageable as desired.
\end{proof}

\section{Asymptotics for \texorpdfstring{$c_{n,k}$}{cnk} for \texorpdfstring{$k$}{k} fixed and \texorpdfstring{$n$}{n} large}
In this section we prove \Cref{AsympTheorem} that for $n \geq k+1$ we have $$c_{k,n} \ge 1-\binom{n}{k}\frac{k^{k+1}}{n^{k+1}}.$$

\begin{proof}[Proof of \Cref{AsympTheorem}]
Indeed, it suffices to show this $c_{k,n}$ works for functions $f$ on a simplex $C=T$ with $f=0$ at the vertices and $f\le 0$ everywhere by \Cref{reduction}. Set $T$ to be the convex hull of the standard basis vectors $e_1,\ldots,e_{k+1}$ in $\mathbb{R}^{k+1}$.

First, using the notation from \Cref{def_partition}, we claim that we have a covering
$$T=\bigcup_{v\in \mathcal{B}_{k,n-k}} \frac{kT+v}{n}.$$
Indeed, take $y\in T$, and consider $ny$. We can write $ny=w_1+\lfloor ny \rfloor$, and $\sum (\lfloor ny \rfloor)_i \ge n-k$. Write $\lfloor ny \rfloor= v+w_2$ with $v,w_2$ non-negative integral vectors such that $\sum v_i=n-k$. Then 
$$y=\frac{(w_1+w_2)+v}{n},$$ with $w_1+w_2\in kT$ and $v\in \mathcal{B}_{k,n-k}$.

We can then write
\begin{align*}\int_T f^{\ast n} &\ge \sum_{v\in\mathcal{B}_{k,n-k}}\int_{\frac{kT+v}{n}}f^{\ast n}(x) dx\\
&= \sum_{v\in \mathcal{B}_{k,n-k}}\left(\frac{k}{n}\right)^k\int_T f^{\ast n}\left(\frac{kx+v_1e_1+\ldots+v_ne_n}{n}\right) dx\\
&\ge \sum_{v\in \mathcal{B}_{k,n-k}}\left(\frac{k}{n}\right)^k\int_T \frac{kf(x)+v_1f(e_1)+\ldots+v_{k+1}f(e_{k+1})}{n}dx\\
&=\binom{n}{k}\frac{k^{k+1}}{n^{k+1}}\int_T f(x) dx.\end{align*}
As $\co(f)=0$ we can rearrange this to
$$\int_T f^{\ast n}(x)-f(x) dx \ge \left(1-\binom{n}{k}\frac{k^{k+1}}{n^{k+1}}\right)\int_T \co(f)(x)-f(x) dx.$$
\end{proof}

\appendix
\section*{Appendix}
\section{Non-sharp \texorpdfstring{$c_{k,n}$}{ckn} for \texorpdfstring{$f^{\ast n}$}{fn} for all \texorpdfstring{$k,n$}{k,n}}\label{Nonsharp}
We now discuss the existence of a non-sharp constant $c_{k,n}>0$ in all dimensions, i.e. that for all compact convex $C\subset \mathbb{R}^k$ and bounded measurable $f:C\to \mathbb{R}$, we have
$$\int f^{\ast n}(x)-f(x) \ge c_{k,n}\int_C \co(f)(x)-f(x) dx.$$
We can immediately deduce the existence of such constants from following result on the stability of Brunn-Minkowski for homothetic regions.

\begin{thm}[\cite{HomoBM}]
For any $k\in\mathbb{N}$ and $t\in (0,1)$, there are constants $c(k,t),d(k,t)>0$ such that for any $A\subset \mathbb{R}^{k+1}$ of positive measure if $|tA+(1-t)A|-|A|\le d(k,t)|A|$, then
$$|tA+(1-t)A|-|A|\ge c(k,t)|\co(A)\setminus A|,$$
where we write $\co(A)$ for the convex hull of $A$.
\end{thm}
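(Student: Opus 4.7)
The plan is to prove this by induction on the dimension $k+1$, with classical Brunn--Minkowski stability as an input. At a high level, when the deficit $|tA+(1-t)A|-|A|$ is small, $A$ is close in symmetric difference to some convex set by Figalli--Maggi--Pratelli-type stability, and the homothetic structure of the sumset then forces this convex set to be close to $\co(A)$ itself, rather than to some arbitrary convex body.

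For the base case $k=0$, one writes $A\subset\mathbb{R}$ as a disjoint union of intervals separated by gaps. Here $|tA+(1-t)A|-|A|$ can be computed explicitly: a gap of length $\ell$ between intervals of lengths $a,b$ contributes roughly $\min(\ell,ta,(1-t)b)$ to the deficit. This is linear in $\ell$ when $\ell$ is small compared to $ta$ and $(1-t)b$, and a short analysis shows that if the total deficit is at most $d(0,t)|A|$, every gap must indeed be small in this sense. This gives the bound $|\co(A)\setminus A| = \sum_i \ell_i \le c(0,t)^{-1}(|tA+(1-t)A|-|A|)$.

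For the inductive step, I would slice $A$ by hyperplanes $H_s=\{x_{k+1}=s\}$ with slices $A_s = A\cap H_s$. The key inclusion $(tA+(1-t)A)\cap H_s \supset tA_{s_1}+(1-t)A_{s_2}$ for $ts_1+(1-t)s_2=s$ lets one apply the inductive hypothesis to suitable slices, controlling $|\co(A_s)\setminus A_s|$. The one-dimensional base case applied to the projection $\pi(A)$ onto the $x_{k+1}$-axis then controls the ``vertical'' gaps between populated slices. To assemble these into control over $|\co(A)\setminus A|$ itself, one uses a Shapley--Folkman-type estimate showing that $\co(A)$ is well approximated by the union of slicewise convex hulls together with their lofted interpolations.

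The hard part will be closing the induction into a genuinely \emph{linear} bound, rather than a weaker polynomial one. A naive iteration loses a multiplicative factor at each stage of the recursion, since one must simultaneously control the horizontal slice deficits and the vertical projection deficit. To preserve linearity, the smallness threshold $d(k,t)$ must be chosen to shrink rapidly enough with $k$ so that slicewise deficits inherit a clean proportionality to the total deficit; one expects $d(k,t)$ to be of rough order $\min(t,1-t)^{O(k)}$. Bridging the local (slicewise) and global structure of $\co(A)$ is precisely where the Shapley--Folkman input is essential, and getting the constants to propagate linearly through the induction is the main technical obstacle.
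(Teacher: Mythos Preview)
This theorem is not proved in the present paper; it is quoted verbatim from the authors' earlier work \cite{HomoBM} and used as a black box in Appendix~A to deduce the existence of some (non-sharp) constant $c_{k,n}$. There is therefore no proof in the paper to compare your proposal against. If you intended to supply a self-contained proof of the cited result, you should consult \cite{HomoBM} directly; the argument there is substantially more involved than the sketch you give.

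Regarding your sketch on its own merits: the outline is plausible in spirit but far from a proof. The base case $k=0$ is essentially fine. The inductive step, however, has a real gap. From slicewise control of $|\co(A_s)\setminus A_s|$ and control of the one-dimensional projection you cannot straightforwardly recover control of $|\co(A)\setminus A|$, because $\co(A)$ is generally much larger than the union of the $\co(A_s)$ (and even than any ``lofted interpolation'' of nearby slices); a single far-away point can enlarge $\co(A)$ dramatically without being visible in most slices or in the projection. The Shapley--Folkman-type input you invoke bounds Hausdorff distance of Minkowski averages to their convex hulls, which is a different statement and does not directly yield the volume comparison you need. You also acknowledge that preserving linearity through the induction is ``the main technical obstacle'' without indicating how to overcome it; in \cite{HomoBM} this is handled by a delicate iterative thickening argument rather than by a clean induction on dimension, and the naive slicing you describe does not close.
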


Indeed, the existence of the constant $c_{k,n}$ for sup-convolution then follows by applying this theorem to the set $A=A_{f,-N}$ where $$A_{f,\lambda}=\{(x,y)\in C\times \mathbb{R}: \lambda \le y \le f(x)\},$$
$t=\frac{1}{n}$, and $-N\le \min(f)$ is sufficiently small so that the $d(k,\frac{1}{n})$ bound is satisfied (in fact this shows that we have a lower bound even if we restricted in the $n$-fold sup-convolution that $x_1=\ldots=x_{n-1}$).

For the benefit of the reader, we present here a simpler, more direct argument. Recall from \Cref{reduction} that it suffices to prove the theorem when we have the domain $C=T$ is a simplex, $f=0$ at the vertices of $T$, and $f\le 0$.

Call a translate of $T'\subset T$ of $\frac{1}{n^\ell}T$ ``good'' if there exists an absolute constant $C_{T'}$ (independent of $f$) such that
$$\int_{T'}f(x) dx \ge \frac{1}{n^{\ell(k+1)}}\int_T f(x) dx - C_{T'}\int_{T}f^{\ast n}(x)-f(x) dx.$$
We make the following observations
\begin{enumerate}
    \item $T$ is good.
    \item If $T'$ is good and $v$ is a vertex of $T$, then $\frac{(n-1)v+T'}{n}$ is good
    \item If $T',T''$ are good and of the same size, then $\frac{T'+(n-1)T''}{n}$ is good.
\end{enumerate}
The first observation is trivial. For the second, we note that
\begin{align*}\int_{\frac{(n-1)v+T'}{n}}f(x)dx &\ge \int_{\frac{(n-1)v+T'}{n}}f^{\ast n}(x)dx-\int_T f^{\ast n}(x)-f(x)dx\\&\ge \frac{1}{n^k}\int_{T'} \frac{(n-1)f(v)+f(x)}{n} dx - \int_T f^{\ast n}(x)-f(x)dx\\
&= \frac{1}{n^{k+1}}\int_{T'}f(x) dx -\int_T f^{\ast n}(x)-f(x) dx\\
&\ge \frac{1}{n^{(\ell+1)(k+1)}}\int_{T}f(x) dx - \left(1+\frac{C_{T'}}{n^{k+1}}\right)\int_T f^{\ast n}(x)-f(x) dx\end{align*}
For the third observation, we note that
\begin{align*}\int_{\frac{(n-1)T'+T''}{n}}f(x) dx &\ge \int_{\frac{(n-1)T'+T''}{n}}f^{\ast n}(x) dx-\int_T f^{\ast n}(x)-f(x) dx\\
&\ge \frac{(n-1)\int_{T'}f(x) dx+\int_{T''}f(x)dx}{n} - \int_T f^{\ast n}(x)-f(x) dx\\
& \ge \frac{1}{n^{\ell(k+1)}}\int_T f(x) dx - \left(1+ \frac{(n-1)C_{T'}+C_{T''}}{n}\right)\int_T f^{\ast n}(x)-f(x) dx.\end{align*}

If for some $\ell$ we have a family $\mathcal{A}$ of good translates of $\frac{1}{n^\ell}T$ which cover $T$, then adding the inequalities together, we obtain (recalling $\co(f)= 0$)
$$\left(\sum_{T'\in \mathcal{A}}C_{T'}\right)\int_T f^{\ast n}(x)-f(x) dx \ge \left(1-\frac{|\mathcal{A}|}{n^{\ell(k+1)}}\right)\int_T \co(f)-f(x) dx.$$
Hence if the total number of the simplices $|\mathcal{A}|$ is strictly less than $n^{\ell(k+1)}$, we are done.

From the second and third observations, for every face $F$ of $T$ (including $T$), the set of good translates of $\frac{1}{n^\ell}T$ is dense among the set of all translates of $\frac{1}{n^\ell}T$ incident to $F$. Together with the fact that simplices have a bounded inefficiency of covering space, we will be able to accomplish this task for a sufficiently large $\ell$. Indeed, as each simplex $\frac{1}{n^\ell}T$ covers a $\frac{1}{n^{\ell k}}$ volume of $T$, standard results from covering theory imply that we can find a family $\mathcal{A}$ with $|\mathcal{A}|=O(n^{\ell k})<n^\ell \cdot n^{\ell k}=n^{\ell(k+1)}$ for $\ell$ sufficiently large.

%%% AUTHOR: optional acknowledgments here
\section*{Acknowledgments} %%  you may comment this out if no Ackno
The authors would like to thank their advisor B\'ela Bollob\'as for his continuous support, and the anonymous reviewers for their helpful comments.

%%% AUTHOR:
%%% Bibliography goes here. Note that the arXiv cannot process bibtex
%%% or biber bibliographies.  Example of acceptable bibliograpy format:
\bibliographystyle{amsplain}

%% AUTHOR: You can generate such a bibliography from a .bib file by 
%% running pdflatex/bibtex/pdflatex/pdflatex and then pasting the .bbl file
%% between \begin{thebibliography} and \end{bibliography}

%%% AUTHOR: Include a short description of each author following the
%%% structure below. Use the same short tags used previously.  
%%% Use \imageat{} and \imagedot{} instead of "@" and "." in
%%% email addresses-this replaces the symbols with graphics to avoid 
%%% e-mail address harvesting from the .pdf file
\begin{dajauthors}
\begin{authorinfo}[pvh]
  Peter van Hintum\\
  Esm\'ee Fairburn Junior Research Fellow
  New College, Oxford University\\
  Oxford, UK\\
  peter\imagedot{}vanhintum\imageat{}new\imagedot{}ox\imagedot{}ac\imagedot{}uk \\
  \url{https://www.new.ox.ac.uk/peter-van-hintum}
\end{authorinfo}
\begin{authorinfo}[hs]
  Hunter Spink\\
  Szeg\"o Assistant Professor\\
  Stanford University\\
  Stanford, California, USA\\
  hspink\imageat{}stanford\imagedot{}edu \\
  \url{https://math.stanford.edu/~hspink/}
\end{authorinfo}
\begin{authorinfo}[mt]
  Marius Tiba\\
  Titchmarsh Research Fellow\\
  Oxford University\\
  Oxford, UK\\
  mt576\imageat{}cam\imagedot{}ac\imagedot{}uk
\end{authorinfo}
\end{dajauthors}

\end{document}